\newcommand{\m}[1]{
\ifdefequal{#1}{1}
{\mathbbm{#1}}
{\mathbb{#1}}
}
\newcommand{\q}[1]{\mathcal{#1}}
\newcommand{\ds}{\displaystyle}
\newcommand{\e}{\varepsilon}
\newcommand\R{\mathbb{R}}
\newcommand{\ext}{\textnormal{ext}}
\newcommand\nor[2]{\left\|#1\right\|_{#2}}
\newcommand\Td{\mathcal{T}}
\newcommand\Sdmu{\mathbb{S}^{d-1}}
\newcommand{\prnl}{\mathcal{P}(R)}
\renewcommand{\le}{\leqslant}
\renewcommand{\ge}{\geqslant}
\newcommand\M{\mathbb{M}}
\theoremstyle{plain}
\newtheorem{thm}{Theorem}[section]
\newtheorem*{thm*}{Theorem}
\newtheorem{prop}[thm]{Proposition}
\newtheorem{cor}[thm]{Corollary}
\newtheorem{lem}[thm]{Lemma}
\theoremstyle{definition}
\newtheorem{definition}[thm]{Definition}
\theoremstyle{remark}
\newtheorem{remark}[thm]{Remark}
\newcommand\Nd{\mathcal{N}}
\DeclareMathOperator{\Span}{\mathrm{Span}}
\date{}
\title{On the set of non radiative solutions for the energy critical wave equation}
\author{Raphaël \textsc{Côte}}
\address{Institut de Recherche Mathématique Avancée, UMR 7501, Université de Strasbourg, 7 rue René-Descartes, F-67084 Strasbourg Cedex, France}
\email{rcote@unistra.fr}
\author[C. Laurent]{Camille Laurent}
\address{CNRS UMR 9008, Universit\'e Reims-Champagne-Ardennes, Laboratoire de Math\'ematiques de Reims (LMR), Moulin de la Housse-BP 1039, 51687 REIMS cedex 2, France}
\email{camille.laurent@univ-reims.fr} 
\subjclass[2020]{35L05, 35L71, 35B40} 
\keywords{wave equation, energy critical, non radiative solution}
\thanks{RC acknowledges support from the University of Strasbourg Institute for Advanced Study (USIAS) for a Fellowship within the French national programme ``Investment for the future'' (IdEx-Unistra).}
\begin{document}

\maketitle

\begin{abstract}
    Non radiative solutions of the energy critical non linear wave equation are global solutions $u$ that furthermore have vanishing asymptotic energy outside the lightcone at both $t \to \pm \infty$:
    \[ \lim_{t \to \pm \infty} \| \nabla_{t,x} u(t) \|_{L^2(|x| \ge |t|+R)} = 0, \]
    for some $R>0$.
    They were shown to play an important role in the analysis of long time dynamics of solutions, in particular regarding the soliton resolution: we refer to the seminal works of Duyckaerts, Kenig and Merle, see \cite{DKM:23} and the references therein.
    
    We show that the set of non radiative solutions which are small in the energy space is a manifold whose tangent space at $0$ is given by non radiative solutions to the linear equation (described in \cite{CL24}). We also construct nonlinear solutions with an arbitrary prescribed radiation field.
\end{abstract}
\section{Introduction}

 We consider solutions $u : I \times \m R^d \to \m R$ ($I$ interval of $\m R$) of the energy critical semilinear wave equation in dimension $3 \le d \le 6$:
\begin{equation} \label{eq:nlw}
\Box u = f(u), 
\end{equation}
with $f(x) = \pm |x|^{q-1} x$ or $f(x) = \pm x^q$ (if $q$ is an integer), where $q = \frac{d+2}{d-2}$ is the $\dot{H}^1$-critical exponent. If $u$ is a time dependent function, we denote $\vec u = (u, \partial_t u)$.

Denote $\mathcal{H} := \dot{H}^1(\m R^{d})\times L^{2}(\m R^{d})$. For a time interval $I\subset \m R$, we define the spaces
\begin{equation} \label{def:spaces_WN}
     W(I) =  L^{q}(I,L^{2q}(\m R^{d})) \quad \text{ and} \quad N(I)= L^1(I,L^2(\m R^d)) 
\end{equation}
together with
\begin{equation} \label{def:space_X}
X(I)= \{ u \in \mathscr C (I,\dot{H}^{1}(\m R^d)) \cap W(I) :  \partial_t u \in \mathscr C(I, L^{2}(\m R^d)) \},
\end{equation}
with the natural norm
\[ \| u \|_{X(I)} = \| u \|_{\mathscr C(I, \dot H^1(\m R^d))} + \| \partial_t u \|_{\mathscr C(I, L^2(\m R^d))} + \| u \|_{W(I)}. \]

We now define the linear and nonlinear flows: if $(u_0,u_1) \in \q H$, then $ \vec u_L(t) = S_L(t)(u_0,u_1)$
 is the solution of the linear wave equation
\begin{equation} \label{eq:lw}
\begin{cases}
\Box u_L = 0, \\
\vec u_L(0) = (u_0,u_1).
\end{cases}
\end{equation}
Similarly, concerning the nonlinear equation, the problem is locally well posed for data $(u_0,u_1)$ in $\q H$ and furthermore, if they are small in that space, the non linear solution is global and scatters linearly as $t \to \pm \infty$: see for example Strauss \cite{S:68}, Rauch \cite{R:79}, Pecher \cite{Pec84}, Ginibre-Velo \cite{GV:95} or Lindblad-Sogge \cite{LS:95} among others. In that case, we will denote $\vec u(t) = \q S(t)(u_0,u_1)$ the solution  to the nonlinear wave equation \eqref{eq:nlw} with initial data $\vec u(0) = (u_0,u_1)$. We may write $S_L(u_0,u_1)$ and $\q S(u_0,u_1)$ to denote the space time function $\vec u_L$ and $\vec u$ respectively. 

\bigskip

For a space time function $\vec v \in X(\m R)$, we define its radiation energy outside a light cone (of base $R \ge 0$) by
\begin{align*}
E_{\textnormal{ext},R}(\vec v)& := \frac{1}{2} \left( \lim_{t \to +\infty} ( \nor{\nabla v}{L^2(|x| \ge t+R)}^2 +  \nor{\partial_t v}{L^2(|x| \ge t+R)}^2) \right.  \\
& \qquad \qquad \left. + \lim_{t \to -\infty} (\nor{\nabla v}{L^2(|x| \ge |t|+R)}^2 +  \nor{\partial_t v}{L^2(|x| \ge |t|+R)}^2) \right),
\end{align*}
provided that the limits exist.

If $u$ is a solution to the linear energy critical wave equation \eqref{eq:lw}, due to finite speed of propagation, the energy outside a light cone
\[  \nor{\nabla u}{L^2(|x| \ge t+R)}^2 +  \nor{\partial_t u}{L^2(|x| \ge t+R)}^2 \]
is decreasing as a function of $t\geq 0$ and admits a limit as $t \to +\infty$, for any $R \ge 0$ (and also as $t \to -\infty$), and so its radiation energy is well defined for any $R \ge 0$. If $u \in X(\m R)$ is a global solution to the non linear energy critical wave equation \eqref{eq:lw}, there is linear scattering, so that the radiative energy is well defined as well (this is always the case for small data). See also \cite{DKM:19} for the case of large data, global solutions to \eqref{eq:lw} which enjoy an a priori  $\dot H^1 \times L^2$ bound.

\bigskip

We say that a space time function $\vec v \in X(\m R)$ is \emph{non radiative} if $E_{\ext,R}(\vec v) =0$ for some $R \ge 0$. Non radiative solutions play a crucial role as the main obstruction in the energy channel method: this machinery was developed with great success, by Duyckaerts, Kenig, Merle and collaborators, to understand the long time behavior of solution to the \emph{radial} energy critical non linear wave equation, in relation with the soliton resolution conjecture. The classification of small \emph{radial} non radiative solutions of \eqref{eq:nlw} was addressed in \cite{DKM:21,CDKM:23}; we also refer \cite{DuyKMClassif,DKM:23} and the references therein. An important issue, raised in \cite[Theorem 1.3]{CDKM:23}, is to  extend solutions which are non radiative for some $R >0$ (defined on the exterior light cone $\{ |x| \ge t+R \}$), to solutions which are non radiative for $R =0$. All in all, we believe that a fine understanding of these particular solutions might constitute a useful step as well in the soliton resolution in the general case (without symmetry).

\bigskip

Our goal in this article is to give a description of an initial data which leads to non radiative solutions $\vec u$ to \eqref{eq:nlw}.

\bigskip

We described in \cite{CL24}, for odd dimensions, the linear space $P_L(R)$ of initial datum $(v_0,v_1)\in \q H$ that give rise to a solution $\vec v = S_L(v_0,v_1)$ to the linear wave equation such that $E_{\ext,R} (\vec v) =0$, in terms of the the Radon transform of the initial data $(v_0,v_1)$ and according to its decomposition in spherical harmonics: for the convenience of the reader, we give further details in the Appendix \ref{appendix1}, see in particular \eqref{PRHarmonic}. This was first done for radial data in odd dimension by \cite{KLLS15}, and in even dimension in \cite{LSW24} (see also \cite{CoteKenigSchlagEquipart}), and it was extended to non radial data for odd dimensions in \cite{CL24} and later in even dimension in \cite{LSWW:22}.

Let us define the operator $\Td$ as follows: for a function $v$ defined on $\m R^d$, $\Td v$ is a function of two variables $(s,\omega)$, defined on $\m R \times \m S^{d-1}$ by its (partial) Fourier transform in the first variable $s$:
\begin{gather}
\label{def:T}
\q F_{s \to \nu}  (\Td v)(\nu, \omega) = c_0  |\nu|^{\frac{d-1}{2}} (e^{i \tau}  \m1 _{\nu < 0}   + e^{-i\tau} \m 1_{\nu \ge 0} ) \hat v(\nu \omega), \\
 \label{def:tau}
\text{where} \quad \tau :=  \frac{d-1}{4} \pi, \quad \quad c_0 = \frac{1}{\sqrt{2(2\pi)^{d-1}}},
\end{gather}
and $\hat v$ is the Fourier transform on $\m R^d$ of $v$:
\[ \forall \xi \in \m R^d, \quad \hat v(\xi) = \int e^{-i \xi \cdot x} v(x) dx. \]

The previous formula can also be expressed in term of the Radon transform $\q R$: it is defined for $f \in \q S(\m R^d)$ by
\[ \q Rf : \m R \times \m S^{d-1} \to \m R, \quad (s,\omega) \mapsto \int_{\omega \cdot y =s} f(y) dy, \]
(with Lebesgue measure on the hyperplane $\{ y : \omega \cdot y =s \}$) and it can be extended to  $f \in L^ 2(\m R^d)$. Then there holds
\[ \q T = m_{d}(D_{s})\q R \quad \text{where} \quad m_{d}(\nu) :=c_{0} |\nu|^{\frac{d-1}{2}}\left(e^{i\tau }\m 1_{ \nu<0} +e^{-i\tau }\m 1_{ \nu \ge 0} \right). \]
In odd dimension, this relation simply writes:
\[ \q T = \frac{(-1)^{\frac{d-1}{2}}}{\sqrt{2(2\pi)^{d-1}}} \partial_s^{ \frac{d-1}{2}} \q R. \]
We refer to \cite{CL24} for details.

Our statement regarding the radiation of linear wave solutions is as follows. It is closely related to the radiation field of Friedlander \cite{F:80}, see also Katayama \cite{Kata:13} for a related result.

\begin{prop}[Radiation field and concentration of energy on the light cone, {\cite[Theorem 1.1]{CL24}}] \label{th1}
{\ }
Let $(v_0, v_1) \in \q H$, and $\vec v = S_L(v_0,v_1)$ be the linear solution to \eqref{eq:lw}. Then as $t \to +\infty$, there holds the convergence in $L^2(\m R^d)^{1+d}$
\begin{gather} \label{eq:expansion_w_1}
\nabla_{t,x} v(t,x) - \frac{1}{\sqrt{2}|x|^{\frac{d-1}{2}}}(\partial_s \Td v_0 - \Td v_1) \left(|x|-t, \frac{x}{|x|} \right) \times \begin{pmatrix}
-1 \\
x/|x|
\end{pmatrix}  \to 0.
\end{gather}
Furthermore, one has
\begin{align}
\lim_{t \to +\infty} \nor{\nabla v}{L^2(|x| \ge t + R)}^2 = \lim_{t \to +\infty} \nor{\partial_{t} v}{L^2(|x| \ge t + R)}^2  = \frac{1}{2} \|  \partial_{s}\Td v_{0} - \Td v_{1} \|_{L^2([R,+\infty) \times \Sdmu)}^2.  \label{equivfinallemmaH1_2} 
\end{align}
\end{prop}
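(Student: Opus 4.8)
The plan is to establish the asymptotic expansion \eqref{eq:expansion_w_1} and then to read off \eqref{equivfinallemmaH1_2} from it; since the convergence in \eqref{eq:expansion_w_1} takes place in $L^2(\m R^d)^{1+d}$, it holds for each of the $1+d$ components, which is all the energy statement needs. Two preliminary reductions. First, by conservation of energy for \eqref{eq:lw} one has $\nor{\nabla_{t,x}v(t)}{L^2(\m R^d)}=\nor{(v_0,v_1)}{\q H}$ for all $t$, while the Fourier representation of $\Td$ in \eqref{def:T} together with Plancherel in polar coordinates shows that $v_0\mapsto\partial_s\Td v_0$ and $v_1\mapsto\Td v_1$ are isometries of $\dot H^1(\m R^d)$, resp. $L^2(\m R^d)$, into $L^2(\m R\times\Sdmu)$, the cross term vanishing by parity, so that $G:=\partial_s\Td v_0-\Td v_1$ satisfies $\nor{G}{L^2(\m R\times\Sdmu)}=\nor{(v_0,v_1)}{\q H}$; hence both sides of \eqref{eq:expansion_w_1} depend continuously on the data in $\q H$ and it suffices to treat $(v_0,v_1)$ Schwartz. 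Second, writing $\widehat v(t,\xi)=\cos(t|\xi|)\widehat v_0(\xi)+|\xi|^{-1}\sin(t|\xi|)\widehat v_1(\xi)$ and expanding the trigonometric factors into $e^{\pm it|\xi|}$, each component of $\nabla_{t,x}v(t,x)$ becomes a finite sum of oscillatory integrals $\int_{\m R^d}e^{i(x\cdot\xi\mp t|\xi|)}a_\pm(\xi)\,d\xi$ with symbols $a_\pm$ homogeneous of degree $1$, built linearly from $\widehat v_0,\widehat v_1$.

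The heart of the matter is the large-$|x|$ asymptotics of these integrals near the light cone. In polar coordinates $\xi=\rho\omega$ ($\rho>0$, $\omega\in\Sdmu$), and with $x=r\sigma$, $r=|x|$, the phase $\rho(\omega\cdot x\mp t)=\rho(r\,\omega\cdot\sigma\mp t)$ has, as a function of $\omega\in\Sdmu$, exactly the two nondegenerate critical points $\omega=\pm\sigma$, with Hessian $\mp r\rho\,\mathrm{Id}$ on the tangent space, hence signature $\mp(d-1)$. Stationary phase in $\omega$ therefore outputs the factor $(r\rho)^{-(d-1)/2}$, the phase $e^{\mp i\tau}$ with $\tau=\tfrac{d-1}{4}\pi$ as in \eqref{def:tau}, and turns the angular weight $\rho^{d-1}$ into $\rho^{(d-1)/2}$ — precisely the symbol $m_d$ of \eqref{def:T}, the sign of the frequency (equivalently, which critical point is active) being responsible for the $\m1_{\nu<0}$ versus $\m1_{\nu\ge0}$ dichotomy. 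For each sign $\mp$ only the critical point sitting on the \emph{outgoing} cone survives in the limit: it produces a one-dimensional inverse Fourier transform in $\rho$ evaluated at $s=r-t$, while the other critical point is localized near $r=-t$ and its contribution has $L^2(\m R^d)$-norm $O(\nor{G}{L^2([t,+\infty)\times\Sdmu)})=o(1)$ as $t\to+\infty$ (a tail of an $L^2$ function). Reassembling, and using $\partial_t(r-t)=-1$, $\nabla_x(r-t)=x/|x|$ while derivatives landing on the angular argument cost an extra $|x|^{-1}$ and are negligible, the leading term is exactly $\frac{1}{\sqrt2\,|x|^{(d-1)/2}}\,G\!\left(|x|-t,\tfrac{x}{|x|}\right)(-1,x/|x|)$ — the constant $c_0$ of \eqref{def:tau} being what normalizes the prefactor to $1/\sqrt2$. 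The remaining error (the non-stationary part of the angular integral, handled by integration by parts in $\omega$ off $\pm\sigma$, and the Taylor remainder in stationary phase) produces $O(|x|^{-(d-1)/2-1})$-type profiles, hence $o(1)$ in $L^2(\m R^d)$ for Schwartz data; the density reduction then extends it to all $(v_0,v_1)\in\q H$. (Equivalently: $\q R\Delta=\partial_s^2\q R$, so $\q R v(t,\cdot)$ solves the $1$-d wave equation in $(t,s)$ for each $\omega$ and d'Alembert applies; recovering $v$ via $m_d(D_s)$ and the dual Radon transform re-introduces exactly the angular integration treated above.)

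Granted \eqref{eq:expansion_w_1}, write $\nabla_{t,x}v(t)=\Psi(t)+\e(t)$ with $\Psi(t,x)=\frac{1}{\sqrt2\,|x|^{(d-1)/2}}G(|x|-t,\tfrac{x}{|x|})\,(-1,x/|x|)$ and $\nor{\e(t)}{L^2(\m R^d)^{1+d}}\to0$, and split $\Psi=(\Psi^{(t)},\Psi^{(x)})$ into its $\partial_t$- and $\nabla_x$-parts. In $\{|x|\ge t+R\}$, passing to polar coordinates and substituting $s=|x|-t\in[R,+\infty)$, the measure $r^{d-1}\,dr$ cancels the factor $|x|^{-(d-1)}$ exactly; since the first component equals $-1$ (of modulus $1$) and $|x/|x||=1$,
\[ \nor{\Psi^{(t)}(t)}{L^2(|x|\ge t+R)}^2=\nor{\Psi^{(x)}(t)}{L^2(|x|\ge t+R)}^2=\tfrac12\nor{G}{L^2([R,+\infty)\times\Sdmu)}^2\qquad\text{for every }t. \]
Since $\nor{\e(t)}{L^2(|x|\ge t+R)}\le\nor{\e(t)}{L^2(\m R^d)^{1+d}}\to0$, the triangle inequality in $L^2(\{|x|\ge t+R\})$, applied separately to $\partial_t v$ and to $\nabla v$, gives $\nor{\partial_t v}{L^2(|x|\ge t+R)}^2\to\tfrac12\nor{G}{L^2([R,+\infty)\times\Sdmu)}^2$ and likewise $\nor{\nabla v}{L^2(|x|\ge t+R)}^2\to\tfrac12\nor{G}{L^2([R,+\infty)\times\Sdmu)}^2$, which is \eqref{equivfinallemmaH1_2}.

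The main obstacle is the rigorous version of the second paragraph: performing the stationary-phase analysis directly at the level of $\nabla_{t,x}v$ (differentiating the natural profile of $v$ itself would introduce a spurious $|x|^{-(d+1)/2}$ singularity at the origin), controlling the remainder in the $L^2(\m R^d)$-topology rather than pointwise — the data being only in $\q H$ — and, most delicately, tracking the constant $c_0$, the phase $\tau$, and the sign split $\m1_{\nu<0}$ vs. $\m1_{\nu\ge0}$, all three of which encode the signature of the angular Hessian at $\omega=\pm\sigma$, so that the limiting profile is identified with $\Td$ as normalized in \eqref{def:T}. In even dimension $m_d(D_s)$ is genuinely nonlocal (strong Huygens fails), but the argument is purely Fourier-analytic and uniform for $3\le d\le6$; an alternative, used in the works cited before Proposition \ref{th1}, is to decompose $v_0,v_1$ in spherical harmonics and reduce to the large-time asymptotics of one-dimensional radial wave equations.
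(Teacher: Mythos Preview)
The paper does not contain a proof of this proposition: it is imported verbatim from \cite[Theorem 1.1]{CL24} and used as input throughout (the only related computation that \emph{is} carried out here is Proposition~\ref{propbijprofil}, which you in fact invoke in your first reduction). So there is no ``paper's own proof'' to compare against.

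That said, your outline is the standard stationary-phase derivation of the Friedlander radiation field and is sound in its architecture. The density reduction via the isometry $\nor{\partial_s\Td v_0-\Td v_1}{L^2}=\nor{(v_0,v_1)}{\q H}$ is exactly Proposition~\ref{propbijprofil} (and the orthogonality of the cross term is the content of Lemma~\ref{lem:range}, valid in odd dimension; in even dimension one argues directly from \eqref{def:T} as in the proof of Proposition~\ref{propbijprofil}). The deduction of \eqref{equivfinallemmaH1_2} from \eqref{eq:expansion_w_1} in your third paragraph is clean and correct.

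Where the sketch remains a sketch is precisely where you flag it: turning the angular stationary-phase expansion into an honest $L^2(\m R^d)$ statement with a remainder that is $o(1)$ uniformly in $t$, not merely pointwise. Two concrete points to watch. First, the ``incoming'' contribution is not the same function $G$ evaluated on a tail; it is $\partial_s\Td v_0+\Td v_1$ (the radiation at $-\infty$, cf.\ \eqref{equivfinallemmaH1_2-infty}), which is a different $L^2$ function --- harmless for the conclusion, but your parenthetical ``$O(\nor{G}{L^2([t,+\infty)\times\Sdmu)})$'' misidentifies it. Second, standard stationary-phase lemmas give pointwise asymptotics with remainders controlled by higher derivatives of the amplitude; upgrading this to $L^2$ in $x$ uniformly in $t$ for merely Schwartz data (and then closing by density) is routine but does require an explicit argument --- typically a dyadic decomposition in $r$ or, as you note in your parenthetical alternative, passing through the Radon transform and the one-dimensional wave equation, which is closer in spirit to how \cite{CL24} proceeds.
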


The function $\partial_s \Td v_0 - \Td v_1$ in \eqref{eq:expansion_w_1} is called the radiation field (at $+\infty$) of $\vec v$. Note that changing $v_{1}$ to $-v_{1}$ and reversing time, we get the same result in negative time
\begin{align}
\lim_{t \to -\infty} \nor{\nabla v}{L^2(|x| \ge t + R)}^2 = \lim_{t \to -\infty} \nor{\partial_{t} v}{L^2(|x| \ge t + R)}^2  = \frac{1}{2} \|  \partial_{s}\Td v_{0} + \Td v_{1} \|_{L^2([R,+\infty) \times \Sdmu)}^2, \label{equivfinallemmaH1_2-infty} 
\end{align}
so that
\begin{equation}
 E_{\ext,R}(\vec v) = \|  \partial_{s}\Td v_{0}  \|_{L^2([R,+\infty) \times \Sdmu)}^2 + \|  \Td v_{1} \|_{L^2([R,+\infty) \times \Sdmu)}^2.
\end{equation}

We want to define $\prnl$ the (nonlinear) space of initial datum giving rise to nonlinear radiative solutions. More precisely, we denote 
\begin{align*}
\prnl=\left\{(u_{0},u_{1}): \q{S}(u_{0},u_{1})
\textnormal{ is defined globally on } \m R \text{ and } E_{\ext,R}(\q{S}(u_{0},u_{1}))=0\right\}.
\end{align*}
Our first result states that around $0 \in \q H$, $\prnl$ is a submanifold of $\q H$, whose tangent space at $0$  is  $P_L(R)$.

\begin{thm} \label{thm:1}
Let $d =3$ or $5$. Let $R>0$, and denote $\pi_R$ the orthogonal projection on $P_L(R)$ (in $\q H$).

There exists $\e>0$ and a $\mathscr C^1$ map
\[ \Phi:  B_{\q H}(0,\e) \to  \q H.  \]
so that $\Phi$ is a diffeomorphism to its image $V=\Phi(B_{\q H}(0,\e))\subset B_{\q H}(0,2\e)$ whose differential at zero is the identity and satisfies 
 \[ \forall (u_0,u_1) \in   B_{\q H}(0,\e), \quad \nor{ (u_{0},u_{1})- \Phi(u_{0},u_{1})}{\q H} \le \| (u_0,u_1) \|_{\q H}^{q}. \]
  \[ \forall (u_0,u_1) \in   B_{\q H}(0,\e), \quad  \pi_R \circ \Phi(u_{0},u_{1})=\pi_{R}(u_{0},u_{1}). \]
Moreover, when restricted to $P_L(R)$, we have $\Phi\left(P_L(R)\cap B_{\q H}(0,\e)\right)=\prnl\cap V $.

In particular, $\prnl\cap V$ is a submanifold of $\q H$ with tangent space at $0$ equal to $P_L(R)$. Moreover, $(\pi_{R})_{\left|\prnl\cap V\right.}$ is a chart from $\prnl\cap V$ to $P_L(R)\cap B_{\q H}(0,\e)$ with inverse $\Phi$.

\end{thm}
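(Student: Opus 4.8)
The plan is to build $\Phi$ by solving a fixed point problem that produces, for each small datum $(u_0,u_1)$, a nearby datum whose nonlinear evolution is non radiative, while leaving the $P(R)$-component unchanged. Concretely, write $(u_0,u_1) = \pi_R(u_0,u_1) + (1-\pi_R)(u_0,u_1)$ and look for $\Phi(u_0,u_1) = \pi_R(u_0,u_1) + h$, where $h \in P(R)^\perp$ is to be determined. By Proposition~\ref{th1} applied to the linear part and by the small-data global theory (so that $\q S(u_0,u_1)$ is global and the nonlinear Duhamel term lies in $X(\m R)$ with the decay needed for the exterior energy limits to exist), the condition $E_{\ext,R}(\q S(\pi_R(u_0,u_1)+h))=0$ reduces, via the radiation-field formula \eqref{equivfinallemmaH1_2} and its time-reversed analogue, to an equation of the schematic form
\[
\partial_s \Td \big(\pi_R(u_0,u_1)+h\big)_0 \mp \Td\big(\pi_R(u_0,u_1)+h\big)_1 = G_\pm\big(\pi_R(u_0,u_1)+h\big) \quad \text{on } [R,\infty)\times\Sdmu,
\]
where $G_\pm$ collects the radiation field of the nonlinear Duhamel term $\int_0^{\pm\infty} S_L(\cdot-\sigma)(0,f(u(\sigma)))\,d\sigma$ and is therefore at least quadratic in the data, with a gain of a power $q$ in norm. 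Since by definition of $P(R)$ the map $h \mapsto (\partial_s\Td h_0 \mp \Td h_1)|_{[R,\infty)\times\Sdmu}$ is, modulo $\pi_R$, an isomorphism of $P(R)^\perp$ onto the relevant target space (this is exactly the content of the description of $P(R)$ recalled in the Appendix), the above is equivalent to a fixed point equation $h = \Psi(u_0,u_1,h)$ with $\Psi$ contracting in a small ball, uniformly in $(u_0,u_1)\in B_{\q H}(0,\e)$.

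The steps, in order, are: (i) set up the functional framework — use the small-data theory to get $\q S$ defined and $\mathscr C^1$ on $B_{\q H}(0,\e)$ into $X(\m R)$, and establish that the nonlinear Duhamel term has a well-defined radiation field at $\pm\infty$ with a quantitative bound $\lesssim \|(u_0,u_1)\|_{\q H}^{q}$ (this uses the $W(\m R)$ control on $u$, Strichartz estimates, and an extension of Proposition~\ref{th1} to the Duhamel term, e.g. by approximating $f(u)$ and passing to the limit); (ii) solve the fixed point for $h$, giving a map $(u_0,u_1)\mapsto h(u_0,u_1)$ that is $\mathscr C^1$ by the implicit function theorem, with $h(0)=0$, $Dh(0)=0$, and $\|h\| \lesssim \|(u_0,u_1)\|_{\q H}^q$; (iii) define $\Phi(u_0,u_1) = \pi_R(u_0,u_1) + (1-\pi_R)(u_0,u_1) + \text{(correction)}$ — more precisely arrange $\Phi(u_0,u_1)$ to have $\pi_R$-component $\pi_R(u_0,u_1)$ and $P(R)^\perp$-component determined so that the datum is non radiative, which after unwinding is $\Phi(u_0,u_1) = (u_0,u_1) + (\text{quadratic correction in }P(R)^\perp)$; then $D\Phi(0)=\mathrm{Id}$, the estimate $\|(u_0,u_1)-\Phi(u_0,u_1)\|_{\q H}\le \|(u_0,u_1)\|_{\q H}^q$ and $\pi_R\circ\Phi=\pi_R$ hold by construction; (iv) since $D\Phi(0)=\mathrm{Id}$, shrink $\e$ so $\Phi$ is a diffeomorphism onto its image $V$; (v) verify the two inclusions $\Phi(P(R)\cap B_{\q H}(0,\e)) \subseteq \prnl\cap V$ and $\supseteq$: the first is immediate since $\Phi$ was designed to land on non radiative data, and for the second, given $(w_0,w_1)\in \prnl\cap V$ set $(u_0,u_1) := \pi_R(w_0,w_1)$, which lies in $P(R)\cap B_{\q H}(0,\e)$ after shrinking $\e$; then $\Phi(u_0,u_1)$ and $(w_0,w_1)$ are both non radiative with the same $\pi_R$-component, so by uniqueness in the fixed point they coincide; (vi) the last assertions about the submanifold structure and the chart $(\pi_R)|_{\prnl\cap V}$ with inverse $\Phi|_{P(R)}$ follow formally from (v) and $D\Phi(0)=\mathrm{Id}$.

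The main obstacle I expect is step (i): extending Proposition~\ref{th1} from the linear flow to the nonlinear Duhamel term $u - u_L$, i.e. showing that $\int_0^{\pm\infty} S_L(\cdot-\sigma)(0,f(u(\sigma)))\,d\sigma$ has a genuine radiation field in $L^2([R,\infty)\times\Sdmu)$, converging in the sense of \eqref{eq:expansion_w_1}, with the quantitative gain $\|(u_0,u_1)\|_{\q H}^q$. This requires care because $f(u)\in N(\m R)=L^1(L^2)$ only, so one must exploit the precise structure of $\Td$ and the Radon transform, the finite speed of propagation (the base $R>0$ is what makes the exterior region see only the "late" part of the source), and a density/approximation argument to justify commuting $\Td$ with the Duhamel integral; the restriction to $d=3,5$ presumably enters here and in the availability of the explicit differential form $\Td = c\,\partial_s^{(d-1)/2}\q R$ that makes these manipulations tractable. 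A secondary technical point is checking that $(u_0,u_1)\mapsto$ (radiation field of the Duhamel term) is $\mathscr C^1$ as a map into $L^2([R,\infty)\times\Sdmu)$, which follows from the $\mathscr C^1$ dependence of $\q S$ on the data in $X(\m R)$ together with the boundedness (shown in step (i)) of the radiation-field map on the Duhamel term, but must be stated with the correct norms.
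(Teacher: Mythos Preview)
Your strategy is close in spirit to the paper's, but there is a structural gap in how $\Phi$ is defined on the full ball. As you phrase it (first paragraph and step (iii): ``$P(R)^\perp$-component determined so that the datum is non radiative''), $\Phi(u_0,u_1)$ is built so that $\q S\Phi(u_0,u_1)$ is \emph{always} non radiative. If so, $\Phi$ sends $B_{\q H}(0,\e)$ into the infinite-codimensional set $\prnl$ and cannot be a diffeomorphism onto an open neighbourhood $V\subset\q H$; in particular $D\Phi(0)=\mathrm{Id}$ is impossible. The theorem requires a diffeomorphism on the whole ball for which non radiativity characterises \emph{exactly} the image of $P(R)$. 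Your step (v) then leans on a uniqueness statement (``a unique small $h\in P(R)^\perp$ makes $p+h$ non radiative''), but you have not produced a definition of $\Phi$ off $P(R)$ that is simultaneously injective on $B_{\q H}(0,\e)$ and compatible with that uniqueness.

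The paper sidesteps this by first building a bounded \emph{linear} operator $T:N(\m R)\to X(\m R)$ (Proposition~\ref{lmDuhPi}): for any source $h$, $Th$ solves $\Box u=h$, is non radiative, and has $\pi_R(\vec u(0))=0$. The fixed point is then run in $X(\m R)$ for $r\mapsto T(f(u_L+r))$ with $u_L=S_L(u_0,u_1)$, and one sets $\Phi(u_0,u_1):=\vec u(0)$ where $u=u_L+r$. This is defined for \emph{every} small $(u_0,u_1)$; since the correction $T(f(u))$ is non radiative by design, $E_{\ext,R}(\vec u)=E_{\ext,R}(\vec u_L)$, and hence $\Phi(u_0,u_1)\in\prnl$ \emph{if and only if} $(u_0,u_1)\in P(R)$. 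Both inclusions in your step (v) then drop out with no separate uniqueness argument, and $\pi_R\circ\Phi=\pi_R$, $\|\Phi-\mathrm{Id}\|\lesssim\|\cdot\|^q$ are immediate from the three defining properties of $T$.

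Regarding your anticipated obstacle (i): the paper does not extend Proposition~\ref{th1} to the Duhamel integral by commuting $\Td$ or $\q R$ through the time integral. Inside the construction of $T$, the Duhamel term is written as a free wave (its scattering state at $\pm\infty$, well defined because $\|h\|_{N([t,+\infty))}\to 0$) plus a remainder whose $\q H$-norm vanishes at infinity; one then applies the \emph{linear} radiation formulae \eqref{equivfinallemmaH1_2}--\eqref{equivfinallemmaH1_2-infty} to those scattering states and kills the resulting radiation on $[R,+\infty)\times\m S^{d-1}$ by a bounded right inverse of $(\partial_s\Td,\Td)$ (Corollary~\ref{cor:inverTd}). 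The restriction to odd $d$ enters only through the explicit range of $\Td$ in Lemma~\ref{lem:range}, not through the differential identity $\Td=c\,\partial_s^{(d-1)/2}\q R$ that you invoke.
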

In particular, this result proves that there are a lot of nonlinear radiative solutions, at least as many as the linear set $P_L(R)$ which is actually a large space, see Appendix~\ref{appendix1}. This description extends some results of \cite{CDKM:23} to the non radial case.

\medskip

Simple non radiative solutions can be constructed as follows: it suffices to consider a static solutions $u(t,x)=u(x)$ for $|x|\geq |t|+R$, with $-\Delta u=f(u)$. Such solutions outside of a ball have been precisely described in our recent work \cite{CL:24} for analytic nonlinearity (which is useful for \eqref{eq:nlw} in dimension $3$). The set $\prnl^{stat}$ of such small solutions is also a manifold whose tangent set at $0$ is the set $P_L(R)^{stat}$ of linear solutions  of $\Delta u_L=0$; but $\prnl^{stat}$ is actually a strict subset of $\prnl$, by a substantial margin: see Remark \ref{rm:1} for more precisions. 

$P_L(R)^{stat}$ is also a subset of $P_L(R)$, so we recover the inclusion $\prnl^{stat}$ into $\prnl$ at the tangent space level. Yet, in \cite{CL:24}, we give a more precise statement: the nonlinear static solutions of $\prnl^{stat}$ ``look'' like the linear one $P_L(R)^{stat}$ at infinity. In a suitable space $Z_{r}$ of analytic functions on $\mathbb{S}^{d-1}$ adapted to the operator $\Delta$, there exists a unique $u_L\in P_L(R)^{stat}$ so that  
\begin{align*}
\nor{(u-u_{L})(r\cdot)}{Z_{r}}\underset{r\to +\infty}{\longrightarrow}0.
\end{align*}
Moreover, the application $u\mapsto u_L$, that appears as a kind of scattering operator is a (local) bijection. It would be very interesting to obtain such precise description for the nonlinear non radiative solutions. 

\bigskip

Our second result is related to wave operator: it says that given any radiation field $F$ (as in \eqref{eq:expansion_w_1}), there exists a unique nonlinear solution of \eqref{eq:nlw} with this prescribed radiation field. This result already appears in the literature, in a form or another, see below. The precise statement is as follows.

\begin{thm} \label{thm:2}
Let $3\le d\le 6$ and $F\in L^{2}(\R \times  \Sdmu)$. Then, there exist $T \in \m R$ and a unique  $u \in X([T,+\infty)$ solution of the nonlinear equation \eqref{eq:nlw} so that, as $t \to +\infty$,
\begin{gather} \label{eq:expansion_w_2}
\nabla_{t,x} u(t,x) - \frac{1}{\sqrt{2}|x|^{\frac{d-1}{2}}}F \left(|x|-t, \frac{x}{|x|} \right) \times \begin{pmatrix}
-1 \\
x/|x|
\end{pmatrix}  \to 0 \quad \text{in} \quad L^2(\m R^d)^{1+d}.
\end{gather}
Furthermore, if $\| F \|_{L^{2}(\R \times  \Sdmu)}$ is small enough, one can choose $T=0$ and $u \in X(\m R)$ is defined globally.
\end{thm}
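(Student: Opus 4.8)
The plan is to construct the solution by a fixed-point argument solving the nonlinear equation backwards from $t=+\infty$, using the linear radiation profile from Proposition \ref{th1} as the asymptotic state. First, by the surjectivity of the map $(v_0,v_1) \mapsto \partial_s \Td v_0 - \Td v_1$ onto $L^2(\R \times \Sdmu)$ (which follows from the explicit formula \eqref{def:T} for $\Td$: given $F$, one may for instance take $v_1 = 0$ and choose $v_0$ so that $\partial_s \Td v_0 = F$, using that $m_d(D_s)\q R$ is invertible on the relevant space), fix $(v_0,v_1) \in \q H$ whose linear evolution $\vec v_L = S_L(v_0,v_1)$ has radiation field exactly $F$. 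By Proposition \ref{th1}, $\vec v_L(t)$ then has the desired asymptotic behavior \eqref{eq:expansion_w_2}. It therefore suffices to find $u \in X([T,+\infty))$ solving \eqref{eq:nlw} with $\| \vec u(t) - \vec v_L(t)\|_{\q H} \to 0$ as $t \to +\infty$; the asymptotic expansion \eqref{eq:expansion_w_2} is then inherited from that of $\vec v_L$.

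To produce such a $u$, I would set up the integral equation
\[ \vec u(t) = \vec v_L(t) - \int_t^{+\infty} S_L(t-s)\bigl(0, f(u(s))\bigr)\, ds \]
and solve it by contraction in the Banach space $X([T,+\infty))$ for $T$ large. The key ingredients are the Strichartz estimates (the same ones underlying local well-posedness in $\q H$, cited in the excerpt): the free term $\vec v_L$ has small $W([T,+\infty))$-norm for $T$ large because $v_L \in W(\R)$, and the Duhamel term is controlled using $\| f(u) \|_{N([T,+\infty))} \lesssim \| u \|_{W([T,+\infty))}^{q}$ together with the fact that $f$ is locally Lipschitz in the appropriate sense (here $3 \le d \le 6$ ensures $q \ge 2$ is in the range where the nonlinearity is well-behaved, though if $q$ is non-integer one must be slightly careful with the Hölder-type difference estimate $|f(a)-f(b)| \lesssim (|a|^{q-1}+|b|^{q-1})|a-b|$). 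Choosing $T$ large enough that $\| v_L \|_{W([T,+\infty))}$ is below the threshold dictated by the Strichartz constants, the map $u \mapsto \vec v_L - \int_t^{+\infty} S_L(t-\cdot)(0,f(u))$ is a contraction on a small ball of $X([T,+\infty))$, yielding a unique fixed point $u$. Convergence $\| \vec u(t) - \vec v_L(t) \|_{\q H} \to 0$ follows since the Duhamel tail $\int_t^{+\infty} S_L(t-s)(0,f(u(s)))\,ds$ has $\q H$-norm bounded by $\| f(u) \|_{N([t,+\infty))} \to 0$.

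For uniqueness within $X([T,+\infty))$ (not just in a small ball), a standard argument works: any two solutions with the same asymptotic state agree on $[T',+\infty)$ for $T'$ large by the contraction, and then backward uniqueness for \eqref{eq:nlw} on the finite interval $[T,T']$ (local well-posedness) propagates the equality down to $T$. Finally, the global statement when $\| F \|_{L^2(\R\times\Sdmu)}$ is small: choose the lift $(v_0,v_1)$ with $\| (v_0,v_1)\|_{\q H} \lesssim \| F\|_{L^2}$ small (the lifting map can be taken bounded), so that $\| v_L \|_{W(\R)}$ is small by Strichartz on all of $\R$; then the same contraction runs on $X(\R)$ with $T=0$, giving a global small solution that still scatters to $\vec v_L$ at $+\infty$, hence satisfies \eqref{eq:expansion_w_2}. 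The main obstacle I anticipate is not any single estimate but bookkeeping the interplay between the two "ends": ensuring the asymptotic expansion \eqref{eq:expansion_w_2} — which is a statement about $\nabla_{t,x} u$ in $L^2$, i.e. convergence in $\q H$ modulo the explicit profile — transfers cleanly from $\vec v_L$ to $\vec u$; this is exactly where one needs that the correction $\vec u - \vec v_L$ is small in $\q H$ (not merely in a Strichartz norm), which the Duhamel estimate provides.
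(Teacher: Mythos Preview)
Your overall approach matches the paper's exactly: lift $F$ to linear data $(v_0,v_1)$ whose free evolution has radiation field $F$, then run the standard wave-operator contraction on $[T,+\infty)$ (the paper works with $w=u-v_L$ rather than $u$ directly, a cosmetic difference).

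There is one genuine error, in your justification of surjectivity. You propose taking $v_1=0$ and solving $\partial_s\Td v_0=F$; this is impossible for generic $F$. The operator $\partial_s\Td:\dot H^1\to L^2(\R\times\Sdmu)$ is \emph{not} onto: since $\hat v(\nu\omega)=\hat v((-\nu)(-\omega))$ (same point of $\R^d$), the image of $\partial_s\Td$ obeys a symmetry under $(s,\omega)\mapsto(-s,-\omega)$. In odd dimension this is exactly Lemma~\ref{lem:range}, which shows the range is only $L^2_{\textnormal{odd}}$ or $L^2_{\textnormal{even}}$, never all of $L^2$; a similar constraint persists in even dimension. The ranges of $\partial_s\Td$ and $\Td$ are complementary halves, so one must use \emph{both} $v_0$ and $v_1$ to hit an arbitrary $F$. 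The paper carries this out in Proposition~\ref{propbijprofil} via an explicit Fourier-side inversion formula, which simultaneously shows the lift $F\mapsto(v_0,v_1)$ is an isometry --- precisely the bounded lift you invoke later for the small-data global statement. Replace your parenthetical with that proposition and the rest of your argument goes through as written.
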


We refer to \cite[Theorem 1.1]{LS23} for a result with a similar flavor, for wave type equations (with other nonlinearities) in dimension $3$, but in different functional spaces; see also \cite{LS24}. 

Theorem \ref{thm:2} is independent of Theorem \ref{thm:1}, but relies on a linear scattering result in $X$ and on Proposition \ref{propbijprofil} which ensures that the map giving the radiation of (from a linear solution) is onto: this last result goes back to Friedlander \cite{F:80} (see also the appendix of \cite{DKM:19}). Here, we provide a new proof based on formula \eqref{def:T}, which actually gives an explicit expression of the inverse.

\section{Proofs}

The spaces $W(I)$, $X(I)$ and $N(I)$ were chosen to satisfy the following Strichartz and nonlinear estimates. For a constant independent of the interval $I$ (or of $t \in \m R$), we have
\begin{align}
\nor{S_L(t)(u_{0},u_{1})}{X(\R)}&\le C \nor{(u_{0},u_{1})}{\mathcal{H}}, \\
\nor{(u(0),\partial_t u(0))}{\mathcal{H}} & \le C\nor{u}{X(\R)}, \\
\nor{\int_{-\infty}^{+\infty}\cos(\tau |D_{x}|) h(\tau) d\tau}{L^{2}(\m R^{d})}&\le C \nor{h}{N(\R)}, \\
\nor{\int_{-\infty}^{+\infty}\cos(\tau |D_{x}|) h(\tau) d\tau}{L^{2}(\m R^{d})}&\le C \nor{h}{N(\R)}, \\
\label{Strichartz} \nor{\int_{-\infty}^{+\infty}\sin(\tau |D_{x}|) h(\tau) d\tau}{L^{2}(\m R^{d})}& \le C \nor{h}{N(\R)}, \\
\nor{v}{X([t,+\infty))} + \nor{\vec v(t)}{\q H}  &  \le C  \nor{h}{N([t,+\infty))} \\
\text{where} \quad v(t) = \int_{t}^{+\infty}\frac{\sin((.-\tau)|D_{x}|)}{|D_{x}|} & h(\tau) d\tau.
\end{align}
The related Strichartz estimates can for example be found in \cite[Theorem 3.1]{LS:95}, see also \cite{GV:95}.
Also notice that $N$ is such that if $h \in N([A,+\infty))$ for some $A \in \m R$, then 
\begin{align}
\label{limiteN}
    \| h \|_{N([t,+\infty))} \to 0\textnormal{ as } t \to +\infty,
\end{align}
(and similarly in a neighbourhood of $-\infty$). We will finally need the nonlinear estimate
\begin{align}
\label{estimNfX}
\nor{f(u)-f(v)}{N(I)}&\le C\nor{u-v}{W(I)}(\nor{u}{W(I)}^{q-1}+\nor{v}{W(I)}^{q-1}).
\end{align}
It does hold in the cases considered for \eqref{eq:nlw} since $|f'(s)| \le C |s|^{q-1}$ and due to Hölder estimates. In fact, our proofs work in any functional setting that respects the above conditions \eqref{Strichartz}-\eqref{limiteN}-\eqref{estimNfX}.

\bigskip

Let us start by a a few observations related to the operator $\q T$.

\begin{definition}
We denote:
\begin{align*}
L^{2}_{\textnormal{odd}}(\R \times  \Sdmu) & :=\left\{ F \in L^{2}(\R \times \Sdmu); \ F(s,\omega)=-F(-s,-\omega), a.e. \right\}, \\
L^{2}_{\textnormal{even}}(\R \times \mathbb{S}^{d-1}) & :=\left\{ F \in L^{2}(\R \times \Sdmu); \ F(s,\omega) = F(-s,-\omega), a.e. \right\}.
\end{align*}
\end{definition}

\begin{lem}[{\cite[Lemma 4.14]{CL24}}] \label{lem:range} Let $d$ be odd.
$\Td$ defines an isometry from $L^{2}(\m R^{d}) \to L^{2}(\R \times \Sdmu)$ and is therefore an isomorphism to its range defined by
\[ Range(\Td) = \begin{cases}
L^{2}_{\textnormal{even}}(\R \times  \Sdmu) & \text{ if } d \equiv 1 [4], \\
L^{2}_{\textnormal{odd}}(\R \times  \Sdmu)& \text{ if } d \equiv 3 [4].
\end{cases} \]
Similarly, $\partial_s \Td: \dot H^1 (\m R^{d}) \to L^{2}( \Sdmu\times \R)$ is isometric and
\[ Range(\partial_s \Td) = \begin{cases}
L^{2}_{\textnormal{odd}}(\R \times  \Sdmu) & \text{ if } d \equiv 1 [4], \\
L^{2}_{\textnormal{even}}(\R \times  \Sdmu)& \text{ if } d \equiv 3 [4].
\end{cases}. \]
\end{lem}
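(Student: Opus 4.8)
My plan is to pass to the Fourier side in the variable $s$. Write $m_{d}(\nu)=c_{0}|\nu|^{\frac{d-1}{2}}\big(e^{i\tau}\m 1_{\nu<0}+e^{-i\tau}\m 1_{\nu\ge 0}\big)$, so that $\q F_{s\to\nu}(\Td v)(\nu,\omega)=m_{d}(\nu)\,\hat v(\nu\omega)$ and $|m_{d}(\nu)|^{2}=c_{0}^{2}|\nu|^{d-1}$ (the phase has modulus one on each of the two disjoint half-lines). I would first prove the isometry: by Plancherel in $s$ and Fubini,
\[ \nor{\Td v}{L^{2}(\R\times\Sdmu)}^{2}=\frac{1}{2\pi}\int_{\Sdmu}\int_{\R}c_{0}^{2}|\nu|^{d-1}\,|\hat v(\nu\omega)|^{2}\,d\nu\,d\omega . \]
Splitting the $\nu$-integral at $0$ and using that $\omega\mapsto-\omega$ is a measure-preserving involution of $\Sdmu$, each half-line contributes $\int_{\Sdmu}\int_{0}^{\infty}\rho^{d-1}|\hat v(\rho\omega)|^{2}\,d\rho\,d\omega$, which equals $\nor{\hat v}{L^{2}(\m R^{d})}^{2}=(2\pi)^{d}\nor{v}{L^{2}(\m R^{d})}^{2}$ by polar coordinates and the Plancherel theorem on $\m R^{d}$. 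The value of $c_{0}$ is precisely what makes $\tfrac{2c_{0}^{2}}{2\pi}(2\pi)^{d}=1$ (with the non-normalized Fourier transform), so $\Td$ is an isometry; in particular its range is a closed subspace of $L^{2}(\R\times\Sdmu)$.

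To identify the range I would set $G=\Td v$ and $g=\q F_{s\to\nu}G$. Reading off the $\nu>0$ part of the definition gives $\hat v(\nu\omega)=c_{0}^{-1}\nu^{-\frac{d-1}{2}}e^{i\tau}g(\nu,\omega)$ for $\nu>0$; substituting $\nu\mapsto-\rho$ and $\omega\mapsto-\omega$ in the definition of $\Td$ and using $(-\rho)(-\omega)=\rho\omega$ then forces the compatibility relation $g(-\rho,-\omega)=e^{2i\tau}g(\rho,\omega)$. Since $d$ is odd, $e^{2i\tau}=e^{i\frac{d-1}{2}\pi}=(-1)^{\frac{d-1}{2}}$, which is $+1$ if $d\equiv 1\pmod 4$ and $-1$ if $d\equiv 3\pmod 4$. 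Because $\q F_{s\to\nu}\big[G(-\cdot,-\cdot)\big](\nu,\omega)=g(-\nu,-\omega)$, undoing the Fourier transform in $s$ turns this into $G(-s,-\omega)=(-1)^{\frac{d-1}{2}}G(s,\omega)$, i.e.\ $G\in L^{2}_{\textnormal{even}}$ or $G\in L^{2}_{\textnormal{odd}}$ according to the residue of $d$ mod $4$. For the reverse inclusion, given $G$ in the corresponding symmetry class I would define $\hat v$ on $\m R^{d}\setminus\{0\}$ by $\hat v(\rho\omega):=c_{0}^{-1}\rho^{-\frac{d-1}{2}}e^{i\tau}g(\rho,\omega)$ for $\rho>0$; the symmetry $|g(-\rho,-\omega)|=|g(\rho,\omega)|$ gives $\nor{\hat v}{L^{2}(\m R^{d})}^{2}=c_{0}^{-2}\int_{\Sdmu}\int_{0}^{\infty}|g(\rho,\omega)|^{2}\,d\rho\,d\omega=\pi c_{0}^{-2}\nor{G}{L^{2}}^{2}$, hence $v:=\q F^{-1}\hat v\in L^{2}(\m R^{d})$ with $\nor{v}{L^{2}}=\nor{G}{L^{2}}$; one then checks $\Td v=G$ (for $\nu>0$ by construction, and for $\nu<0$ by feeding the formula for $\hat v$ back into the definition and invoking the compatibility relation, which now holds by hypothesis on $G$). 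This settles the assertions about $\Td$.

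For $\partial_{s}\Td$ I would run the same argument with $m_{d}(\nu)$ replaced by $i\nu\, m_{d}(\nu)$, so that $|i\nu\, m_{d}(\nu)|^{2}=c_{0}^{2}|\nu|^{d+1}$, which matches the weight $|\xi|^{2}$ of $\dot H^{1}(\m R^{d})$; the identical computation then gives that $\partial_{s}\Td:\dot H^{1}(\m R^{d})\to L^{2}(\R\times\Sdmu)$ is isometric. The extra factor $i\nu$ changes the sign in the compatibility relation to $g(-\rho,-\omega)=-(-1)^{\frac{d-1}{2}}g(\rho,\omega)$, which exchanges the even and odd classes and produces the stated ranges; for surjectivity one reconstructs $\hat v(\rho\omega)=(ic_{0})^{-1}\rho^{-\frac{d+1}{2}}e^{i\tau}g(\rho,\omega)$, and here one uses $d\ge 3$ to ensure $\int_{|\xi|\le 1}|\xi|^{-2}\,d\xi<\infty$, so that $\hat v\in L^{1}_{\mathrm{loc}}$ near the origin and $v$ is a genuine element of $\dot H^{1}(\m R^{d})$.

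Nothing here is deep, but the bookkeeping must be done carefully: matching the Fourier and polar normalizations so that $c_{0}$ makes the maps isometric on the nose rather than up to a constant, and tracking the phase $e^{2i\tau}=(-1)^{\frac{d-1}{2}}$ consistently through both the forward map and the reconstruction so as to land on exactly the even/odd dichotomy. The only genuinely non-formal point, and where I expect the real work, is the converse inclusions: verifying that the reconstructed $\hat v$ defines an honest function of $L^{2}(\m R^{d})$ — respectively a tempered distribution in $\dot H^{1}(\m R^{d})$, which is where the hypothesis $d\ge 3$ enters — and that the imposed symmetry on $G$ is precisely the condition needed for $\Td v$ (resp.\ $\partial_{s}\Td v$) to coincide with $G$ on the negative-frequency half-line.
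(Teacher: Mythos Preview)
The paper does not prove this lemma; it is quoted verbatim from \cite[Lemma~4.14]{CL24} and used as a black box. Your argument is correct and self-contained: the isometry follows from Plancherel in $s$ together with polar coordinates on $\m R^{d}$ exactly as you compute (and $c_{0}$ is indeed the constant that makes the normalization exact), and the range is pinned down by the compatibility relation $g(-\rho,-\omega)=e^{2i\tau}g(\rho,\omega)$ with $e^{2i\tau}=(-1)^{(d-1)/2}$, which is the natural mechanism. Your reconstruction for the reverse inclusion is also correct; the check on the negative-frequency half-line goes through precisely because the assumed parity of $G$ supplies the compatibility relation. The remark about $d\ge 3$ and local integrability of $\hat v$ near the origin is a reasonable caution for making sense of $\dot H^{1}$ as a space of tempered distributions, though for the bare statement (isometry onto a closed subspace, identified via Fourier) it is not strictly needed. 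In short, there is nothing to compare against in this paper, and your proof stands on its own; it is almost certainly what the cited reference does as well.
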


We obtain the following corollary.
\begin{cor}\label{cor:inverTd}Let $d$ be odd and $R>0$. There exists a continuous linear map $G_{R}^{1}:L^{2}((R,+\infty) \times  \Sdmu)\mapsto L^{2}(\m R^{d})$ so that for any $F\in L^{2}((R,+\infty) \times  \Sdmu)$, $\Td  G_{R}^{1} F=F$ a.e. on $(R,+\infty)$.

Similarly, there exists a continuous linear map $G_{R}^{0}:L^{2}((R,+\infty) \times  \Sdmu)\mapsto \dot{H}^{1}(\m R^{d})$ so that for any $F\in L^{2}((R,+\infty) \times  \Sdmu)$, $\partial_{s}\Td  G_{R}^{0} F=F$ a.e. on $(R,+\infty)$.
\end{cor}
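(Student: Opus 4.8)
The plan is to build $G_R^1$ by combining a restriction-extension operator on $L^2(\R\times\Sdmu)$ with the inverse of the isometry $\Td$ furnished by Lemma \ref{lem:range}. Concretely, given $F\in L^2((R,+\infty)\times\Sdmu)$, I would first extend it to an element $\widetilde F$ of the appropriate parity subspace of $L^2(\R\times\Sdmu)$: namely, take the symmetric (or antisymmetric) extension of $F$ from $s>R$ to all of $\R\times\Sdmu$ by setting $\widetilde F(s,\omega):=\pm F(-s,-\omega)$ for $s<-R$ (sign according to whether $d\equiv 1$ or $d\equiv 3\ [4]$, so that $\widetilde F\in Range(\Td)$) and $\widetilde F:=0$ on the slab $|s|\le R$. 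This is clearly linear and bounded: $\|\widetilde F\|_{L^2(\R\times\Sdmu)}^2 = 2\|F\|_{L^2((R,+\infty)\times\Sdmu)}^2$. By Lemma \ref{lem:range}, $\widetilde F$ lies in the range of the isometry $\Td:L^2(\m R^d)\to L^2(\R\times\Sdmu)$, so there is a unique $g\in L^2(\m R^d)$ with $\Td g=\widetilde F$, and $\|g\|_{L^2}=\|\widetilde F\|_{L^2}\le\sqrt2\,\|F\|_{L^2((R,+\infty)\times\Sdmu)}$. Define $G_R^1 F:=g=\Td^{-1}\widetilde F$. It is linear (composition of linear maps), continuous (composition of bounded maps), and by construction $\Td G_R^1 F=\widetilde F=F$ a.e.\ on $(R,+\infty)\times\Sdmu$.

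For $G_R^0$ I would argue identically, using the second half of Lemma \ref{lem:range}: $\partial_s\Td:\dot H^1(\m R^d)\to L^2(\R\times\Sdmu)$ is an isometry whose range is the parity subspace with the \emph{opposite} convention ($L^2_{\mathrm{odd}}$ if $d\equiv1$, $L^2_{\mathrm{even}}$ if $d\equiv3\ [4]$). So here the extension of $F$ should be chosen with the matching parity — i.e.\ the antisymmetric extension when $d\equiv1\ [4]$ and the symmetric one when $d\equiv3\ [4]$ — so that $\widetilde F\in Range(\partial_s\Td)$; then set $G_R^0 F:=(\partial_s\Td)^{-1}\widetilde F\in\dot H^1(\m R^d)$, which satisfies $\partial_s\Td G_R^0 F=F$ a.e.\ on $(R,+\infty)$, with $\|G_R^0 F\|_{\dot H^1}\le\sqrt2\,\|F\|_{L^2((R,+\infty)\times\Sdmu)}$.

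The only point requiring a moment's care — and the nearest thing to an obstacle — is checking that the chosen extension actually lands in the prescribed parity subspace, i.e.\ that the vanishing on $|s|\le R$ is compatible with the symmetry relation $\widetilde F(s,\omega)=\pm\widetilde F(-s,-\omega)$; this is immediate since the slab $|s|\le R$ is itself invariant under $(s,\omega)\mapsto(-s,-\omega)$, so setting $\widetilde F=0$ there respects either parity. Everything else is a formal consequence of Lemma \ref{lem:range}: the isometry property gives continuity of the inverse, and restriction to $(R,+\infty)$ recovers $F$ because that is exactly the region where we prescribed $\widetilde F=F$.
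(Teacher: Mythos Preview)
Your proposal is correct and follows essentially the same approach as the paper: extend $F$ by the appropriate parity to an element of $Range(\Td)$ (resp.\ $Range(\partial_s\Td)$), vanishing on $|s|\le R$, and apply the inverse of the isometry. The only cosmetic difference is that the paper invokes the open mapping theorem to justify continuity of $\Td^{-1}$ on its range, whereas you use the isometry property directly (which is in fact sharper and yields your explicit constant $\sqrt{2}$).
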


\begin{proof}
We just prove the result for $\Td$ and $d \equiv 1 [4]$, the other cases being similar. Since $ Range(\Td)=L^{2}_{\textnormal{even}}(\R \times  \Sdmu)$ is a closed subsets of the Banach space $L^{2}(\R \times \Sdmu)$, we can apply the open mapping Theorem of Banach to define a continuous inverse $\Td^{-1}$ from $L^{2}_{\textnormal{even}}(\R \times  \Sdmu)$ to $L^{2}(\m R^{d})$. Let $\widetilde{F}$ be the even extension of $F\in L^{2}((R,+\infty) \times  \Sdmu)$ that is equal to zero on $s\in [-R,R]$. More precisely
\begin{align*}
\widetilde{F}(s,\omega)&=F(s,\omega)\quad \textnormal{ for }s>R\\
\widetilde{F}(s,\omega)&=F(-s,-\omega)\quad \textnormal{ for }s<-R\\
\widetilde{F}(s,\omega)&=0\quad \textnormal{ for }s\in [-R,R]. 
\end{align*}
It is clear that $\widetilde{F}\in L^{2}_{\textnormal{even}}(\R \times  \Sdmu)$. Defining $G_{R}^{1}F=\Td^{-1}\widetilde{F}$, we obtain $\Td  G_{R}^{1}F =\widetilde{F}$ which satisfies the expected result.
\end{proof}

Given a source term $f$, we can now construct a solution to the linear equation with this source term, which is non radiative.

\begin{prop}\label{lmDuhPi}Let $d$ odd and $X$, $N$ functional spaces satisfying \eqref{Strichartz} and \eqref{limiteN}.
Let $h\in N(\R)$. There exists a \emph{continuous} linear map $T:  N(\m R) \to X(\m R)$, such that for any $h \in N(\m R)$, $u = Th$ is the unique element $u\in X(\R)$ satisfying
\begin{enumerate}
\item $u$ is solution of $\Box u=h$,
\item $E_{\ext,R}(\vec u)=0$,
\item \label{propPiR0}$\pi_{R}(\vec u(0))=0$.
\end{enumerate}
\end{prop}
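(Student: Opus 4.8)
The plan is to build $u$ explicitly as a linear solution with source $h$, started from a carefully chosen "final state", and then correct it by an element of $P(R)$ to kill the $\pi_R$-component. First I would use the Duhamel-type operator appearing in the Strichartz list: set
\[
 w(t) = -\int_t^{+\infty} \frac{\sin((t-\tau)|D_x|)}{|D_x|}\, h(\tau)\, d\tau,
\]
so that $\Box w = h$, $w \in X(\mathbb R)$ with $\|w\|_{X(\mathbb R)} \le C\|h\|_{N(\mathbb R)}$, and by \eqref{limiteN} one has $\vec w(t) \to 0$ in $\q H$ as $t \to +\infty$. Hence the radiation field of $w$ at $+\infty$ vanishes, i.e. the $+\infty$ part of $E_{\ext,R}(\vec w)$ is zero for every $R$. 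The trouble is the behaviour as $t \to -\infty$: there $\vec w$ scatters to some linear solution $S_L(a_0,a_1)$ (from the $X(\mathbb R)$ bound and the usual linear scattering argument), and its radiation field at $-\infty$ is $\partial_s\Td a_0 + \Td a_1 \in L^2(\mathbb R\times\Sdmu)$, which need not vanish on $[R,+\infty)$.

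To cancel it I would subtract a linear solution with the opposite radiation field at $-\infty$, constructed via Corollary \ref{cor:inverTd}. More precisely, let $F := (\partial_s\Td a_0 + \Td a_1)\big|_{(R,+\infty)\times\Sdmu}$ and, using $G_R^0, G_R^1$, choose $(b_0,b_1)\in\q H$ with $\partial_s\Td b_0 = F$ and $\Td b_1 = 0$ on $(R,+\infty)$ — actually one wants $\partial_s \Td b_0 + \Td b_1 = F$ on $(R,+\infty)$, which is arranged by taking $b_0 = G_R^0 F$, $b_1 = 0$; one must also ensure the $+\infty$-radiation $\partial_s\Td b_0 - \Td b_1$ vanishes on $(R,+\infty)$, which forces instead the split choice $\partial_s\Td b_0 = F/2$, $\Td b_1 = F/2$ on $(R,+\infty)$, both solvable by Corollary \ref{cor:inverTd}. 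Then $v := w - S_L(b_0,b_1)$ still solves $\Box v = h$, lies in $X(\mathbb R)$ with norm $\lesssim \|h\|_{N(\mathbb R)}$, and by construction $E_{\ext,R}(\vec v) = 0$ (both $\pm\infty$ contributions vanish on $[R,+\infty)$, using Proposition \ref{th1} and \eqref{equivfinallemmaH1_2-infty}, and the fact that the $-\infty$ radiation of $w - S_L(b_0,b_1)$ is $F - F = 0$ there while the $+\infty$ radiation of $S_L(b_0,b_1)$ is $0$ there and that of $w$ is $0$ everywhere).

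Finally I would fix property \ref{propPiR0}: replace $v$ by $u := v - S_L(\pi_R(\vec v(0)))$. Since $\pi_R(\vec v(0)) \in P(R)$, the added linear solution is non radiative for the base $R$, so adding it does not change $E_{\ext,R}$; it still solves $\Box u = h$; and now $\pi_R(\vec u(0)) = \pi_R(\vec v(0)) - \pi_R(\pi_R(\vec v(0))) = 0$ because $\pi_R$ is a projection onto $P(R)$ and $\vec{S_L(w_0,w_1)}(0) = (w_0,w_1)$. Linearity and continuity of $T := h \mapsto u$ follow since each step — the Duhamel operator, the scattering extraction $h \mapsto (a_0,a_1)$, the maps $G_R^0,G_R^1$, $S_L$, and $\pi_R$ — is linear and bounded on the relevant spaces, with all bounds controlled by $\|h\|_{N(\mathbb R)}$. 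For uniqueness: if $u_1, u_2$ both satisfy (1)–(3), then $z := u_1 - u_2 \in X(\mathbb R)$ solves $\Box z = 0$, so $z = S_L(z_0,z_1)$; condition (2) together with the identity $E_{\ext,R}(\vec z) = \|\partial_s\Td z_0\|_{L^2([R,\infty)\times\Sdmu)}^2 + \|\Td z_1\|_{L^2([R,\infty)\times\Sdmu)}^2$ from \eqref{equivfinallemmaH1_2-infty} forces $(z_0,z_1) \in P(R)$, and then condition (3) gives $\pi_R(z_0,z_1) = (z_0,z_1) = 0$, so $z = 0$. The main obstacle is the bookkeeping of the radiation fields at $\pm\infty$ and making sure the correction $(b_0,b_1)$ kills the $-\infty$ radiation on $(R,+\infty)$ without reintroducing a $+\infty$ radiation there — handled cleanly by the even/odd splitting in Corollary \ref{cor:inverTd}; everything else is a routine assembly of the quoted Strichartz and isometry statements.
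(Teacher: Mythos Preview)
Your proposal is correct and follows essentially the same route as the paper: build a particular solution of $\Box u = h$ via a Duhamel integral anchored at one time infinity (so that the radiation there vanishes automatically), add a free linear wave $S_L(b_0,b_1)$ chosen through Corollary~\ref{cor:inverTd} to kill the radiation on $[R,+\infty)$ at the \emph{other} infinity without spoiling the first, then subtract $S_L(\pi_R(\cdot))$ to enforce condition~\ref{propPiR0}; uniqueness is argued exactly as in the paper. The only cosmetic difference is that the paper anchors the Duhamel at $-\infty$ (defining $v(t)=\int_{-\infty}^t \frac{\sin((t-\tau)|D_x|)}{|D_x|}h(\tau)\,d\tau$) and writes out the scattering data $(v_{0+},v_{1+})$ at $+\infty$ explicitly as $\cos/\sin$ integrals, whereas you anchor at $+\infty$ and invoke the scattering data $(a_0,a_1)$ at $-\infty$ abstractly; the two are mirror images and yield the same linear bounded map $T$.
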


\begin{proof}

\emph{Step 1.} We first look for $\widetilde{u}$ satisfying the hypothesis 1) and 2), but not necessarily 3) We decompose $\tilde u = v+ w$ where
\[ v(t) :=\int_{-\infty}^{t}\frac{\sin((t-\tau)|D_{x}|)}{|D_{x}|}h(\tau) d\tau, \]
so that $\Box v=h$ with morally $0$ data at $-\infty$ and $w$ solution of $\Box w=0$ is to be chosen later on. Notice that changing $t$ to $-t$ in \eqref{Strichartz}, we get
\begin{align*}
\nor{\vec{v}(t)}{\mathcal{H}}\le\nor{\int_{-\infty}^{t}\frac{\sin((t-\tau)|D_{x}|)}{|D_{x}|}h(\tau) d\tau}{\q H}&\le C \nor{h}{N((-\infty,t])}.
\end{align*}
Using \eqref{limiteN}, this  directly implies
\begin{align} \label{limv-infty}
 \lim_{t \to -\infty} ( \nor{\nabla v}{L^2(\m R^{d})}^2 +  \nor{\partial_t v}{L^2(\m R^{d})}^2)=0.
\end{align} 
Also, by  \eqref{Strichartz}, there hold
\[ \nor{v}{X(\R)}\le C \nor{h}{N(\R)}. \]Let us now estimate the exterior energy (outside a truncated cone) of $\vec v$ as  $t\to +\infty$. We write
\begin{align}
v(t)&=\int_{-\infty}^{+\infty}\frac{\sin((t-\tau)|D_{x}|)}{|D_{x}|}h(\tau) d\tau-\int_{t}^{+\infty}\frac{\sin((t-\tau)|D_{x}|)}{|D_{x}|}h(\tau) d\tau\\
&=\frac{\sin(t|D_{x}|)}{|D_{x}|}\int_{-\infty}^{+\infty}\cos(\tau |D_{x}|)h(\tau) d\tau-\cos(t |D_{x}|)\int_{-\infty}^{+\infty}\frac{\sin(\tau|D_{x}|)}{|D_{x}|}h(\tau) d\tau  \\
& \qquad -\int_{t}^{+\infty}\frac{\sin((t-\tau)|D_{x}|)}{|D_{x}|}h(\tau) d\tau  \\
\label{defv} &=: \frac{\sin(t|D_{x}|)}{|D_{x}|}v_{1+}+\cos(t |D_{x}|)v_{0+}+r(t). \end{align}
In other words, $\vec v = S_L(v_{0+},v_{1+})+ \vec r$.
We estimate using \eqref{Strichartz}
\begin{align}
\label{e:v+r}
\nor{v_{1+}}{L^{2}(\m R^{d})}&=\nor{\int_{-\infty}^{+\infty}\cos(\tau |D_{x}|)h(\tau) d\tau}{L^{2}(\m R^{d})}\le C \nor{h}{N(\R)}, \\
\nor{v_{0+}}{\dot{H}^{1}(\m R^{d})}&=\nor{\int_{-\infty}^{+\infty}\sin(\tau |D_{x}|)h(\tau) d\tau}{L^{2}(\m R^{d})}\le C \nor{h}{N(\R)}, \\
\nor{r}{X(\R)}&=\nor{\int_{.}^{+\infty}\frac{\sin((\cdot-\tau)|D_{x}|)}{|D_{x}|}h(\tau) d\tau}{X(\R)}\le C \nor{h}{N(\R)}, \\
\nor{\vec r(t)}{\mathcal{H}}&=\nor{\int_{t}^{+\infty}\frac{\sin((t-\tau)|D_{x}|)}{|D_{x}|} h(\tau) d\tau}{\mathcal{H}}\le C \nor{h}{N([t,+\infty))}.
\end{align}
In particular, due to \eqref{limiteN}, we have
\begin{align}
\label{limr}
 \lim_{t \to +\infty} ( \nor{\nabla r}{L^2(\m R^{d})}^2 +  \nor{\partial_t r}{L^2(\m R^{d})}^2)=0.
 \end{align}
We will select $(w_{0},w_{1}) = (w(0),\partial_{t} w(0))$ the data at initial time for $w$, so that $w(t) = S_L(t) (w_0,w_1)$.  We can now compute the radiation of $u$ in terms of $(w_0,w_1)$ and $(v_{0+}, v_{1+})$. Indeed, for $t \to -\infty$, using \eqref{equivfinallemmaH1_2-infty} and \eqref{limv-infty}, we have
\begin{align*}
\MoveEqLeft \lim_{t \to -\infty} (\nor{\nabla \widetilde{u}}{L^2(|x| \ge |t|+R)}^2 +  \nor{\partial_t \widetilde{u}}{L^2(|x| \ge |t|+R)}^2) \\
& =\lim_{t \to -\infty} (\nor{\nabla w(t)}{L^2(|x| \ge |t|+R)}^2 +  \nor{\partial_t w(t)}{L^2(|x| \ge |t|+R)}^2)\\
&=\|  \partial_{s}\Td w_{0} + \Td w_{1} \|_{L^2([R,+\infty) \times \Sdmu)}^2. 
\end{align*}
Similarly, for $t \to +\infty$, $\vec{\tilde u}(t) = S_L(t)(w_0+v_{0+},w_1+v_{1+}) + \vec r(t)$ so that  using \eqref{limr} and \eqref{equivfinallemmaH1_2}, we have
\begin{align*}
\MoveEqLeft\lim_{t \to + \infty} (\nor{\nabla \widetilde{u}}{L^2(|x| \ge |t|+R)}^2 +  \nor{\partial_t \widetilde{u}}{L^2(|x| \ge |t|+R)}^2) \\
& =\lim_{t \to +\infty} (\nor{\nabla (v + w)(t)}{L^2(|x| \ge |t|+R)}^2 +  \nor{\partial_t (v + w)(t) }{L^2(|x| \ge |t|+R)}^2)\\
&= \|  \partial_{s}\Td (w_{0}+v_{0+}) - \Td (w_{1}+v_{1+}) \|_{L^2([R,+\infty) \times \Sdmu)}^2.
\end{align*}
Hence, summing up, we get:
\begin{multline} \label{eq:ext_tildeu}
E_{\ext,R}(\vec{\tilde u})  = \frac{1}{2} \| \partial_s \q T w_0 + \q T w_1 \|_{L^2([R,+\infty) \times \m S^{d-1})} \\
 + \frac{1}{2} \| \partial_s \q T (w_0+v_{0+}) - \q T (w_1+v_{1+}) \|_{L^2([R,+\infty) \times \m S^{d-1})}.
\end{multline}
We therefore look for $(w_{0},w_{1})\in \dot{H}^{1}\times L^{2}$ such that 
\begin{align}
\begin{cases} \partial_{s}\Td w_{0}+ \Td w_{1}=0 \text{ on } [R,+\infty)\times \mathbb{S}^{d-1}, a.e.\\
\partial_{s}\Td (w_{0}+v_{0+})- \Td (w_{1}+v_{1+})=0 \text{ on } [R,+\infty)\times \mathbb{S}^{d-1}, a.e. \label{eqnw01+}
\end{cases}
\end{align}
Equivalently: 
\[
\begin{cases}
2\Td w_{1}=-\Td v_{1+}+\partial_{s}\Td v_{0+} \text{ on } [R,+\infty)\times \mathbb{S}^{d-1}, a.e. \\
2\partial_{s}\Td w_{0}=-\partial_{s}\Td v_{0+}+ \Td v_{1+} \text{ on } [R,+\infty)\times \mathbb{S}^{d-1}, a.e.
\end{cases}
\]
Due to Corollary \ref{cor:inverTd}, the previous equations can be solved with a continuous inverse. 

To summarize, we finally define
\begin{align}
\label{defw10}
w_{1}=\frac{1}{2}G_{R}^{1}(-\Td v_{1+}+\partial_{s}\Td v_{0+}) \quad \text{and} \quad w_{0}=\frac{1}{2}G_{R}^{0}(-\partial_{s}\Td v_{0+}+\Td v_{1+}).
\end{align}
Then  $(w_{0},w_{1})$ solve the system \eqref{eqnw01+} and, thanks to Lemma \ref{lem:range} and \eqref{e:v+r}, satisfy the estimates
\begin{align*}
\nor{(w_{0}, w_{1})}{\mathcal{H}} & \le C\nor{\Td v_{1+}-\partial_{s}\Td v_{0+}}{L^{2}([R,+\infty) \times  \Sdmu)} \\
& \le C\nor{v_{1+}}{L^{2}(\m R^{d})}+C\nor{v_{0+}}{\dot{H}^{1}(\m R^{d})}\le C \nor{h}{N(\R)}.
\end{align*}
Then we let $\vec{\widetilde{u}}= \vec v+ S_L (w_0,w_1)$ where $\vec v$ is defined in \eqref{defv} and $(w_{0}, w_{1})$ is defined in \eqref{defw10}. Then  $\Box \widetilde{u}=\Box v=h$ and, in view of \eqref{eq:ext_tildeu}, $E_{\ext,R}(\vec{\tilde u}) =0$. Also, we have the bound
\begin{align}
\nor{\widetilde{u}}{X(\R)}\le \nor{v}{X(\R)}+\nor{w}{X(\R)}\le C\nor{h}{N(\R)}+\nor{(w_{0}, w_{1})}{\mathcal{H}}\le C\nor{h}{N(\R)}.
\end{align}

\bigskip

\emph{Step 2.} Now that $\tilde u$ is defined, we simply let $u=\widetilde{u}-u_{R}$ where $\vec u_{R}= S_L(\pi_{R}\left(\widetilde{u}(0),\partial_{t}\widetilde{u}(0)\right))$: indeed, $u_R$ is a non radiative solution, and solves $\Box u_{R}=0$. Also, regarding continuity of the map, we just need to write
\begin{align}
\nor{u_{R}}{X(\R)}\le C\nor{\pi_{R}\left(\widetilde{u}(0),\partial_{t}\widetilde{u}(0)\right)}{\mathcal{H}}\le \nor{\left(\widetilde{u}(0),\partial_{t}\widetilde{u}(0)\right)}{\mathcal{H}}\le C\nor{\widetilde{u}}{X(\R)}\le C\nor{h}{N(\R)},
\end{align}
so that $\| u \|_{X(\m R)} \le C \|h \|_{N(\m R)}$. This finishes the existence part.

\bigskip

\emph{Step 3.} Concerning uniqueness: let $u_{1}$ and $u_{2}$ be two such solutions of the problem. In particular, $z=u_{1}-u_{2}$ satisfy:
\begin{enumerate}
\item $z$ is solution of $\Box z=0$,
\item $E_{\ext,R}(\vec z)=0$,
\item $\pi_{R}(\vec z(0))=0$.
\end{enumerate}
In particular, the first and second assumptions imply $(z(0),\partial_{t} z(0))\in P_L(R)$ and therefore $\vec z(0)=\pi_{R}(\vec z(0))$. Together with the third assumption, we infer $(z(0),\partial_{t}z(0))=0$ and therefore $z=0$, and $u_1=u_2$.
\end{proof}

With Proposition \ref{lmDuhPi} in hand, we can now prove the theorem.

\begin{proof}[Proof of Theorem \ref{thm:1}]
For $(u_{0},u_{1})\in \q H$, let $\vec u_L = S_L(u_0,u_1)$. We are looking for a solution $u$ of 
\begin{align} \label{eq:G}
u= u_L + T(f(u)).
\end{align}
Indeed, if $u \in X(\m R)$ solves \eqref{eq:G}, then
\[ \Box u = \Box (T f(u)) = f(u), \]
so that $u $ solves \eqref{eq:nlw}. 
To solve \eqref{eq:G}, given $(u_0,u_1) \in P_L(R)$ with $\| (u_0,u_1) \|_{\q H} \le \e$, we use a fixed point argument on small closed balls $B(0,\e)$ of $X(\m R)$ for the map
\[ G: r \mapsto  T(f(u_L+r)). \]
Due to the continuity of $T: N(\m R)\to X(\m R)  $ (provided by Proposition \ref{lmDuhPi}), and using \eqref{Strichartz} and \eqref{estimNfX}, we get for $r, \tilde r \in X(\m R)$,
\begin{align*}
\MoveEqLeft \nor{G(r)}{X(\R)}\le C \nor{f(u_{L}+r)}{N(\R)}\le C \nor{u_{L}+r}{X(\R)}^{q}\le C (\e^{q}+\nor{r}{X}^{q}), \\
\MoveEqLeft \nor{G(r)-G(r'))}{X(\R)} \le C \nor{f(u_{L}+r)-f(u_{L}+\widetilde{r})}{N(\R)}\\
& \le C \nor{r-\widetilde{r}}{W(\R)}(\nor{u_{L}+r}{W(\R)}^{q-1}+\nor{u_{L}+\widetilde{r}}{W(\R)}^{q-1}) \\
& \le C \nor{r-\widetilde{r}}{X}(\e^{q-1}+\nor{r}{X}^{q-1}+\nor{\widetilde{r}}{X}^{q-1}) .
\end{align*}
So, for $\e$ small enough, $G$ admits a unique fixed point $v$ in $\overline B_{X(\m R)}(0,\e)$, the closed ball of radius $\e$ in $X(\m R)$. Furthermore 
\begin{equation} \label{est:fp_G}
\| v \|_{X(\m R)} = \| G(v) \|_{X(\m R)} \le C \| (u_0,u_1) \|_{\q H}^q.
\end{equation}
Then $u := u_L + v$ solves \eqref{eq:G}. Also, by regularity of the Banach fixed point with parameter, the map $(u_0,u_1) \mapsto v$ is $\mathscr C^{1}$ from $B_{\q H}(0,\e)$ to $X(\m R)$ (notice that the nonlinearity is $\mathscr C^{1}$), with differential $0$ at $0 \in \q H$, due to \eqref{est:fp_G}. 
 Finally,
  
\[  \pi_{R}(u(0),\partial_{t}u(0))=\pi_{R}(u_{0}, u_{1}) + \pi_R(T(f(u))(0), \partial_{t}T(f(u))(0)) =\pi_{R} (u_{0}, u_{1}). \]
Therefore, the map
\[ \Phi: (u_0,u_1) \mapsto (u,\partial_t u)(0), \]
(where $u$ is as above) satisfies the first part of Theorem \ref{thm:1}, up to possibly diminishing $\e$.

Now, assuming $(u_{0},u_{1})\in P_L(R)$, we define $\vec u = \q S\Phi (u_{0},u_{1})$, the associated nonlinear solution. We have $f(u) \in N(\m R)$ due to \eqref{estimNfX} and as $E_{\ext,R}(S_L(u_{0},u_{1})) =0$, the radiation of $u$ is well defined and
\[ E_{\ext,R}(\vec u) =  E_{\ext,R}(T(f(u)), \partial_t T(f(u))) =0, \]
so that $u \in \prnl$. So, we have proved $\Phi(P_L(R)\cap B_{\q H}(0,\e))\subset \prnl\cap V$. 

Reciprocally, let $(v_{0},v_{1})\in  \prnl\cap V$. By definition of $V$, it can be written $(v_{0},v_{1})=\Phi (u_{0},u_{1})$ with $(u_{0},u_{1})\in B_{\q H}(0,\e)$. Denoting $\vec u = \q S\Phi (u_{0},u_{1})=\q S (v_{0},v_{1})$, the associated nonlinear solution, we have, by definition of $\Phi$, $\vec u=S_L (u_{0},u_{1}) + T(f(u))$. In particular, as $E_{\ext,R}(T(f(u)), \partial_t T(f(u)))=0$, we have
\[ E_{\ext,R}(\vec u) = E_{\ext,R}(S_L (u_{0},u_{1})). \]
Now we assumed $(v_{0},v_{1})\in \prnl$, so that $E_{\ext,R}(\vec u) =E_{\ext,R}(\q S (v_{0},v_{1}))=0$, and
\[ E_{\ext,R}(S_L (u_{0},u_{1}))=0. \]
Thus, $(u_{0},u_{1})\in P_L(R)$ and $(v_{0},v_{1})\in \Phi (P_L(R)\cap B_{\q H}(0,\e))$. 

The last statement of the theorem is only a rephrasing of the previous results in terms of submanifolds in Banach spaces.
\end{proof}

Now, we turn to the proof of Theorem \ref{thm:2} and begin by a Proposition stating that the radiation operator is onto. 

\begin{prop}[Friedlander \cite{F:80}]
\label{propbijprofil}
The application 
\begin{align*}
   \q H &\longrightarrow L^2( \m R \times \m S^{d-1})\\
   (v_{0},v_{1})&\longmapsto \partial_{s}\Td v_{0} - \Td v_{1}
\end{align*}
is a bijective isometry.
\end{prop}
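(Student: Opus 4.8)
The plan is to prove Proposition \ref{propbijprofil} by combining the two halves of Lemma \ref{lem:range}, which handle the parity structure of the ranges of $\Td$ and $\partial_s\Td$ separately. First I would recall that, by Lemma \ref{lem:range}, $\partial_s\Td$ is an isometry from $\dot H^1(\m R^d)$ onto $L^2_{\textnormal{odd}}$ (resp. $L^2_{\textnormal{even}}$) and $\Td$ is an isometry from $L^2(\m R^d)$ onto $L^2_{\textnormal{even}}$ (resp. $L^2_{\textnormal{odd}}$), when $d\equiv 1\ [4]$ (resp. $d\equiv 3\ [4]$); in either case the two ranges are the two \emph{complementary} parity subspaces whose orthogonal direct sum is all of $L^2(\R\times\Sdmu)$. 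So for a map $(v_0,v_1)\mapsto \partial_s\Td v_0 - \Td v_1$ one should check (i) the two pieces land in orthogonal subspaces, (ii) each piece is an isometric isomorphism onto its subspace, and (iii) the subspaces span everything.

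The key steps, in order: (1) observe $L^2(\R\times\Sdmu) = L^2_{\textnormal{even}} \oplus^\perp L^2_{\textnormal{odd}}$ — indeed any $F$ decomposes as $F = \tfrac12(F(s,\omega)+F(-s,-\omega)) + \tfrac12(F(s,\omega)-F(-s,-\omega))$, the two parts are even and odd respectively, and they are $L^2$-orthogonal since $\int F_{\textnormal{even}}\overline{F_{\textnormal{odd}}}$ is invariant under $(s,\omega)\mapsto(-s,-\omega)$ while that substitution flips its sign. (2) By Lemma \ref{lem:range}, whichever parity $d$ has, $\partial_s\Td$ maps $\dot H^1$ isometrically onto one summand and $-\Td$ maps $L^2$ isometrically onto the other (the sign does not affect isometry or range). (3) Hence for $(v_0,v_1)\in\q H$, writing $F = \partial_s\Td v_0 - \Td v_1$, the orthogonality gives $\|F\|_{L^2}^2 = \|\partial_s\Td v_0\|_{L^2}^2 + \|\Td v_1\|_{L^2}^2 = \|v_0\|_{\dot H^1}^2 + \|v_1\|_{L^2}^2 = \|(v_0,v_1)\|_{\q H}^2$, which is the isometry claim. (4) For surjectivity, given any $F\in L^2(\R\times\Sdmu)$, split $F = F_1 + F_2$ along the orthogonal decomposition so that $F_1\in \mathrm{Range}(\partial_s\Td)$ and $F_2\in\mathrm{Range}(\Td)$; then pick $v_0 = (\partial_s\Td)^{-1}F_1\in\dot H^1$ and $v_1 = -\Td^{-1}F_2\in L^2$, and this $(v_0,v_1)$ maps to $F$. (5) Injectivity is immediate from the isometry. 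This gives that the map is a bijective isometry.

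I do not expect a genuine obstacle here: the statement is essentially a repackaging of Lemma \ref{lem:range} plus the elementary orthogonal parity decomposition of $L^2(\R\times\Sdmu)$. The only point requiring a moment's care is step (1)–(2): verifying that the two ranges appearing in Lemma \ref{lem:range} are genuinely \emph{orthogonal complements} (not merely complementary as sets), which follows from the sign-flip argument in step (1), and checking that the cross term $\langle \partial_s\Td v_0, \Td v_1\rangle$ vanishes — this is exactly the orthogonality of $L^2_{\textnormal{even}}$ and $L^2_{\textnormal{odd}}$, so no extra computation with the explicit multiplier $m_d$ is needed. One should also remark that the even- and odd-dimensional cases of $d$ modulo $4$ are symmetric: swapping which summand plays which role does not change the conclusion, so it suffices to treat, say, $d\equiv 1\ [4]$ and note the other case is identical.
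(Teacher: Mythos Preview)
Your argument is clean and correct for odd $d$, and it takes a genuinely different route from the paper. The paper does not invoke Lemma \ref{lem:range} at all: instead it writes down the explicit Fourier multiplier, constructs an explicit inverse $(v_0,v_1)$ from $\hat F$, and verifies the isometry by a direct polar-coordinates computation. Your approach is more conceptual --- the parity decomposition $L^2=L^2_{\textnormal{even}}\oplus^\perp L^2_{\textnormal{odd}}$ together with Lemma \ref{lem:range} immediately gives both isometry and bijectivity with essentially no calculation --- and it makes transparent \emph{why} the map is an isometry (the two pieces land in orthogonal summands).

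There is, however, a genuine gap in coverage. Lemma \ref{lem:range} is stated only for $d$ odd, whereas Proposition \ref{propbijprofil} is used in the proof of Theorem \ref{thm:2}, which is asserted for $3\le d\le 6$ and so must include $d=4,6$. For even $d$ the multiplier phase satisfies $e^{2i\tau}=\pm i$ rather than $\pm 1$, so the ranges of $\Td$ and $\partial_s\Td$ are no longer the even/odd subspaces $L^2_{\textnormal{even}}$, $L^2_{\textnormal{odd}}$, and your decomposition does not apply as written. The paper's explicit Fourier inversion, by contrast, makes no parity assumption on $d$ and goes through uniformly. If you want to salvage your strategy in even dimension you would need an analog of Lemma \ref{lem:range} describing the (still complementary, still orthogonal) ranges of $\Td$ and $\partial_s\Td$ for even $d$; this can be done, but it is extra work that your proposal does not supply.
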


\begin{proof} For the convenience of the reader, we provide a proof with an explicit inversion formula in terms of Fourier transform. Formula \eqref{def:T} gives 
\begin{align}
\q F_{s \to \nu}  (\partial_s \Td v_0-\Td v_1)(\nu, \omega)
 =  c_0  |\nu|^{\frac{d-1}{2}} (e^{i \tau}  \m 1 _{\nu < 0}   + e^{-i\tau} \m 1_{\nu \ge 0} ) (i \nu \hat v_0(\nu \omega) -\hat v_1(\nu \omega)). 
\end{align}
For the injectivity, we could compute directly that the application is an isometry, see for instance \cite[Lemma 2.1.]{CL24} for a closely related computation. Here we can directly check that $\partial_{s}\Td v_{0} - \Td v_{1}=0$ implies $i \nu \hat v_0(\nu \omega) =\hat v_1(\nu \omega)$ almost everywhere in $\m R \times \m S^{d-1}$. Applying at $(\nu,\omega)$ and $(-\nu,-\omega)$, it gives $(v_0,v_1)=(0,0)$.

For the surjectivity, given $F \in L^2(\m R \times \m S^{d-1})$, denote for simplicity $\hat F = \q F_{s \to \nu}F$, and define $v_0$ and $v_1$ by their Fourier transform as follows: for $\xi \in \m R^d \setminus \{ 0 \}$, with $\xi = \rho \omega$ where $\rho > 0$ and $\omega \in \m S^{d-1}$, we set
\begin{align*}
    \hat v_0 (\xi) & = \frac{1}{2i c_0 \rho^{\frac{d+1}{2}}} \left( e^{i \tau} \hat F(\rho, \omega) - e^{-i \tau} \hat F(-\rho,-\omega) \right), \\
     \hat v_1 (\xi) & = -\frac{1}{2 c_0 \rho^{\frac{d-1}{2}}} \left( e^{i \tau} \hat F(\rho, \omega) + e^{-i \tau} \hat F(-\rho,-\omega) \right).
\end{align*}
Then for $\omega \in \m S^{d-1}$, we have for $\nu >0 $
\begin{align*}
\MoveEqLeft 
\q F_{s \to \nu}  (\partial_s \Td v_0-\Td v_1)(\nu, \omega)
 =  c_0  \nu^{\frac{d-1}{2}} e^{-i\tau} (i \nu \hat v_0(\nu \omega) -\hat v_1(\nu \omega)) \\
& = c_0 \nu^{\frac{d-1}{2}} e^{-i\tau} \left(  \frac{i \nu}{2i c_0 \nu^{\frac{d+1}{2}}} \left( e^{i \tau} \hat F(\nu, \omega) - e^{-i \tau} \hat F(-\nu,-\omega) \right) \right. \\
& \qquad \qquad \left. + \frac{1}{2 c_0 \nu^{\frac{d-1}{2}}} \left( e^{i \tau} \hat F(\nu, \omega) + e^{-i \tau} \hat F(-\nu,-\omega) \right) \right) \\
& = \hat F(\nu,\omega),
\end{align*}
and if $\nu <0$,
\begin{align*}
\MoveEqLeft 
\q F_{s \to \nu}  (\partial_s \Td v_0-\Td v_1)(\nu, \omega)
=  c_0  |\nu|^{\frac{d-1}{2}} e^{i \tau}   \left( -i |\nu| \hat v_0(|\nu| (- \omega)) -\hat v_1(|\nu| (- \omega)) \right) \\
& = c_0  |\nu|^{\frac{d-1}{2}} e^{i \tau}   \left( - \frac{i |\nu|}{2i c_0 |\nu|^{\frac{d+1}{2}}} \left( e^{i \tau} \hat F(|\nu|, -\omega) - e^{-i \tau} \hat F(\nu,\omega) \right) \right. \\
& \qquad \qquad \left. + \frac{1}{2 c_0 |\nu|^{\frac{d-1}{2}}} \left( e^{i \tau} \hat F(|\nu|, -\omega) + e^{-i \tau} \hat F(\nu,\omega) \right) \right) \\
& = \hat F(\nu,\omega).
\end{align*}
Hence there hold
\[ (\partial_s \Td v_0-\Td v_1) = F. \]

We verify that $(v_0,v_1)$ defined as above are indeed in $\q H$.
\begin{align*}
\nor{v_0}{\dot{H}^1}^2 & =\frac{1}{(2\pi)^d}\nor{|\cdot|\hat v_0(\cdot)}{L^2}^2=\frac{1}{(2\pi)^d}\int_0^{+\infty}\rho^{d-1}\int_{\omega\in \Sdmu} \rho^2\left|\hat v_0(\rho\omega)\right|^2~d\omega~d\rho\\
&=\frac{1}{4 c_0^2 (2\pi)^d}\int_0^{+\infty}\int_{\omega\in \Sdmu} \left|e^{i \tau} \hat F(\rho, \omega) - e^{-i \tau} \hat F(-\rho,-\omega)  \right|^2~d\omega~d\rho.
\end{align*}
\begin{align*}
\nor{v_1}{L^2}^2&=\frac{1}{(2\pi)^d}\nor{\hat v_1(\cdot)}{L^2}^2=\frac{1}{(2\pi)^d}\int_0^{+\infty}\rho^{d-1}\int_{\omega\in \Sdmu} \left|\hat v_1(\rho\omega)\right|^2~d\omega~d\rho\\
&=\frac{1}{4 c_0^2 (2\pi)^d}\int_0^{+\infty}\int_{\omega\in \Sdmu} \left|e^{i \tau} \hat F(\rho, \omega) + e^{-i \tau} \hat F(-\rho,-\omega)  \right|^2~d\omega~d\rho.
\end{align*}
Finally, it is an isometry: indeed, $\ds \frac{1}{4 c_0^2 (2\pi)^d}=\frac{1}{4\pi}$ and 
\begin{multline*}
 \left|e^{i \tau} \hat F(\rho, \omega) - e^{-i \tau} \hat F(-\rho,-\omega)  \right|^2+ \left|e^{i \tau} \hat F(\rho, \omega) + e^{-i \tau} \hat F(-\rho,-\omega)  \right|^2 \\
 = 2\left|\hat F(\rho, \omega) \right|^2+ 2\left| \hat F(-\rho,-\omega)  \right|^2,
\end{multline*}
so that
\begin{align*}
\MoveEqLeft \nor{v_0}{\dot{H}^1}^2+\nor{v_1}{L^2}^2 =\frac{1}{2\pi}\int_0^{+\infty}\int_{\omega\in \Sdmu}\left|\hat F(\rho, \omega) \right|^2+ \left| \hat F(-\rho,-\omega)  \right|^2~d\omega~d\rho\\
&=\frac{1}{2\pi}\int_0^{+\infty}\int_{\omega\in \Sdmu}\left|\hat F(\rho, \omega) \right|^2+ \left| \hat F(-\rho,\omega)  \right|^2~d\omega~d\rho \\ 
& =\frac{1}{2\pi}\int_{\R}\int_{\omega\in \Sdmu}\left|\hat F(\rho, \omega) \right|^2~d\omega~d\rho =\int_{\R}\int_{\omega\in \Sdmu}\left|F(s, \omega) \right|^2~d\omega~ds \\
& =\nor{F}{L^2(\R\times \Sdmu)}^2. \qedhere
\end{align*}
\end{proof}

\begin{proof}[Proof of Theorem \ref{thm:2}]
\emph{Step 1.} We first construct the linear scattering state, that is find $(v_0,v_1) \in  \q H$ such that, denoting $\vec v_L= S_L(v_0,v_1)$, as $t \to +\infty$,
\begin{gather} \label{eq:expansion_w_3}
\nabla_{t,x} v_L(t,x) - \frac{1}{\sqrt{2}|x|^{\frac{d-1}{2}}}F \left(|x|-t, \frac{x}{|x|} \right) \times \begin{pmatrix}
-1 \\
x/|x|
\end{pmatrix}  \to 0 \quad \text{in} \quad L^2(\m R^d)^{1+d}.
\end{gather}
Due to Proposition \ref{propbijprofil}, there exists $(v_0,v_1)\in \q H$ so that 
\[ F=(\partial_s \Td v_0 - \Td v_1). \]
In view of \eqref{eq:expansion_w_1}, we see that $\vec v_L = S_L(\cdot)(v_0,v_1)$ satisfies the expected asymptotic \eqref{eq:expansion_w_3}.

\bigskip

\emph{Step 2.} We now construct $\vec u$, solution to \eqref{eq:nlw} such that $\| \vec u - \vec v_L(t) \|_{\q H} \to 0$ as $t \to +\infty$: this is simply the wave operator, and is standard. We provide some elements of proof for the sake of completeness. We decompose $\vec u(t) = \vec v_L(t) + \vec w(t)$ and write $\vec w$ as solution of a fixed point problem. Let $T \in \m R$ to be chosen later: the Duhamel formula on $[t, \tau]$ (for $\tau \ge t$) gives 
\[ \vec v_L(\tau) + \vec w(\tau) = S_L(\tau-t) (\vec v_L(t) +\vec w(t))+ \int_t^\tau S_L(\tau-s) \begin{pmatrix} 0 \\ f(v_L(s) + w(s))  \end{pmatrix} ds. \]
Notice that $\vec v_L(t) = S_L(t-T) \vec v_L(T)$; compose by $S_L(t-\tau)$ and let $\tau \to +\infty$: as $\| S_L(t-\tau) \vec w(\tau) \|_{\q H} = \| \vec w(\tau) \|_{\q H}$ is meant to tend to $0$, we arrive at the fixed point formulation:
\[ \vec w(t) = \Psi\vec w (t), \quad \text{where} \quad  \Psi \vec v(t) := - \int_t^{+\infty} S_L(t-s) \begin{pmatrix} 0 \\ f(v_L(s) + v(s)) \end{pmatrix} ds. \]
Let $T \in \m R$ to be fixed later, we work in small closed balls $\overline B(0,\e)$ of $X([T,+\infty))$. By \eqref{Strichartz} and \eqref{estimNfX}, we have for $\vec v \in X([T,+\infty))$,
\[ \| \Psi \vec v \|_{X([T,+\infty))} \le C \|  f(v_L + v) \|_{N([T,+\infty))} \le C \left( \| v_L \|_{W([T,+\infty))}^q + \| v \|_{W([T,+\infty))}^q \right). \]
Similarly,
\begin{align*}
\MoveEqLeft \| \Psi \vec v - \Psi \vec {\tilde v} \|_{X([T,+\infty))} \le C \| f(v_L + v)-f(v_L + \tilde v)  \|_{N([T,+\infty)} \\
 & \le C \left( \| v_L \|_{W([T,+\infty))}^{q-1}  + \| v \|_{W([T,+\infty))}^{q-1} +  \| \tilde v \|_{W([T,+\infty))}^{q-1} \right) \| v - \tilde v \|_{W([T,+\infty))}.
\end{align*}
Let $T$ be such that $\| v_L \|_{W([T,+\infty))}^{q-1} \le \e$ be small enough, then $\Psi$ admits a unique fixed point $\vec w$ in $B(0,\e)$, and $\vec u = \vec v_L + \vec w$ answers the question.
\end{proof}

\appendix

\section{Description of the set \texorpdfstring{$P_L(R)$}{P_L(R)} of linear non radiative solutions} \label{appendix1}

In this section, we gather some results of \cite{CL24} where a precise description of the set $P_L(R)$ was performed for $R>0$. This corresponds to classifying the linear solutions $u$ that have vanishing asymptotic energy on the exterior light cone $|x| \ge t+R$ with $R> 0$, that is
\[ E_{ext,R}(u) =0. \]

By finite speed of propagation, initial data which are compactly supported in  $|x| \le R$ obviously satisfy this condition. We will call this space
\[
\mathcal{K}_{R,comp}= \left\{(u_0,u_1)\in\dot H^1 \times L^2 (\m R^d): (u_0,u_1) |_{\{ |x| > R \}} =0\right\}.
\]
where the equality is in the distributional sense.

It turns out that these are not the only examples. We will now need some further notation.

We denote $(Y_{\ell})_{\ell \in \m M}$
a countable orthonormal basis of spherical harmonics of $\m S^{d-1}$. $Y_{\ell}$ is the restriction to $\m S^{d-1}$ of a harmonic (homogeneous) polynomial. For short, we will denote $l=l(\ell)$ the degree of this polynomial.

The non radiative functions will be the following. Denote for $k \in \m N$,
\[ \alpha_{k}: = -l-d+2k+2. \]
$\alpha_k$ also depends on $\ell$, but here and below, we silence this dependence to keep notations light. Then let
\begin{align} \label{def:gk}
g_{k}(x)= \m 1_{\{|x|>R\}}|x|^{\alpha_{k}}Y_{\ell}\left(\frac{x}{|x|}\right).
\end{align}
Note that $g_k \in L^2$ if and only if $\alpha_k <-d/2$. We introduce
\[
\Nd_{R, \ell}^{0}=\Span \left( g_{k} ;  \text{ for }  k \in \m N \text{ such that }\alpha_{k}<-d/2\right)
\]
Similarly, let
\begin{align} \label{def:fk}
f_{k}(x)= \begin{cases}
\ds  \left( \frac{|x|}{R} \right)^{\alpha_{k}}Y_{\ell}\left(\frac{x}{|x|}\right) & \text{ for }|x|>R\\
\ds  \left( \frac{|x|}{R} \right)^{l}Y_{\ell}\left(\frac{x}{|x|}\right) & \text{ for }|x|\le R.
\end{cases}
\end{align}
Note that $f_k \in \dot H^{1}$ if and only if $\alpha_{k}<-d/2+1$. Also, the value of $f_{k}$ in $|x|\le R$ is not very important; our choice allows to keep continuity and that the restriction $f_{k}|_{\left\{|x| < R\right\}}$ is a harmonic polynomial, so that $f_{k}$ is orthogonal to (in $\dot H^1$) to functions with compact support in $B(0,R)$.

Let
\[ \Nd_{R, \ell}^{1}=\Span \left( f_{k};  \text{ for } k \in \m N \text{ such that } \alpha_{k}<-d/2+1\right). \]
For any $\ell\in \M$, we note the space
\[ 
P_{\ell}(R)=\Nd_{R, \ell}^{0} \times \Nd_{R, \ell}^{1}.  \]

\begin{remark} \label{rm:1}
For a fixed spherical harmonics $Y_\ell$, only the value $k=0$ corresponding to $\alpha_0=-l-d+2$ produces a solution of the stationary equation $\Delta u=0$, and from \cite{CL:24} (in dimension 3), a nonlinear stationary solution defined outside a large ball: via time invariance, this yields a curve (manifold of dimension 1) of solutions stationary outside a light cone.

Theorem \ref{thm:1} constructs a non radiative solution for all elements in $P_{\ell}(R)$, which, except for those on the curve above, are \emph{not} stationary outside a light cone.
\end{remark}

One of the result of \cite[Theorem 1.7]{CL24} was the precise description of $P_L(R)$ in odd dimensions as follows. 
\begin{align}
\label{PRHarmonic}
P_L(R)= \mathcal{K}_{R,comp} \stackrel{\perp}{\oplus}  \bigoplus_{\ell\in \m M}^\perp P_{\ell}(R).    
\end{align}
(the orthogonality is related to the natural scalar product of $\dot H^1 \times L^2$).


\bibliographystyle{plain} 
\bibliography{biblio}

\begin{thebibliography}{10}

\bibitem{CDKM:23}
Charles Collot, Thomas Duyckaerts, Carlos Kenig, and Frank Merle.
\newblock On classification of non-radiative solutions for various
  energy-critical wave equations.
\newblock {\em Adv. Math.}, 434:Paper No. 109337, 91, 2023.

\bibitem{CoteKenigSchlagEquipart}
Rapha{\"e}l C{\^o}te, Carlos~E. Kenig, and Wilhelm Schlag.
\newblock Energy partition for the linear radial wave equation.
\newblock {\em Math. Ann.}, 358(3-4):573--607, 2014.

\bibitem{CL24}
Rapha\"{e}l C\^{o}te and Camille Laurent.
\newblock Concentration close to the cone for linear waves.
\newblock {\em Rev. Mat. Iberoam.}, 40(1):201--250, 2024.

\bibitem{CL:24}
Rapha\"el C\^{o}te and Camille Laurent.
\newblock A scattering operator for some nonlinear elliptic equations.
\newblock {\em arXiv:2312.17514}, 2024.

\bibitem{DuyKMClassif}
Thomas Duyckaerts, Carlos Kenig, and Frank Merle.
\newblock Classification of radial solutions of the focusing, energy-critical
  wave equation.
\newblock {\em Camb. J. Math.}, 1(1):75--144, 2013.

\bibitem{DKM:19}
Thomas Duyckaerts, Carlos Kenig, and Frank Merle.
\newblock Scattering profile for global solutions of the energy-critical wave
  equation.
\newblock {\em J. Eur. Math. Soc. (JEMS)}, 21(7):2117--2162, 2019.

\bibitem{DKM:21}
Thomas Duyckaerts, Carlos Kenig, and Frank Merle.
\newblock Decay estimates for nonradiative solutions of the energy-critical
  focusing wave equation.
\newblock {\em J. Geom. Anal.}, 31(7):7036--7074, 2021.

\bibitem{DKM:23}
Thomas Duyckaerts, Carlos Kenig, and Frank Merle.
\newblock Soliton resolution for the radial critical wave equation in all odd
  space dimensions.
\newblock {\em Acta Math.}, 230(1):1--92, 2023.

\bibitem{F:80}
Friedrich~G. Friedlander.
\newblock Radiation fields and hyperbolic scattering theory.
\newblock {\em Math. Proc. Cambridge Philos. Soc.}, 88(3):483--515, 1980.

\bibitem{GV:95}
Jean Ginibre and Giorgio Velo.
\newblock Generalized {S}trichartz inequalities for the wave equation.
\newblock {\em J. Funct. Anal.}, 133(1):50--68, 1995.

\bibitem{Kata:13}
Soichiro Katayama.
\newblock Asymptotic behavior for systems of nonlinear wave equations with
  multiple propagation speeds in three space dimensions.
\newblock {\em J. Differential Equations}, 255(1):120--150, 2013.

\bibitem{KLLS15}
Carlos Kenig, Andrew Lawrie, Baoping Liu, and Wilhelm Schlag.
\newblock Channels of energy for the linear radial wave equation.
\newblock {\em Adv. Math.}, 285:877--936, 2015.

\bibitem{LSWW:22}
Liang Li, Ruipeng Shen, Chenhui Wang, and Lijuan Wei.
\newblock Asymptotic behaviour of non-radiative solution to the wave equations.
\newblock {\em Preprint, https://arxiv.org/abs/2201.02286}, 2022.

\bibitem{LSW24}
Liang Li, Ruipeng Shen, and Lijuan Wei.
\newblock Explicit formula of radiation fields of free waves with applications
  on channel of energy.
\newblock {\em Analysis \& PDE}, 17(2):723–748, March 2024.

\bibitem{LS24}
Hans Lindblad and Volker Schlue.
\newblock Scattering for wave equations with sources close to the lightcone and
  prescribed radiation fields.
\newblock {\em Preprint, https://arxiv.org/abs/2303.10569}, 2023.

\bibitem{LS23}
Hans Lindblad and Volker Schlue.
\newblock Scattering from infinity for semilinear wave equations satisfying the
  null condition or the weak null condition.
\newblock {\em J. Hyperbolic Differ. Equ.}, 20(1):155--218, 2023.

\bibitem{LS:95}
Hans Lindblad and Christopher~D. Sogge.
\newblock On existence and scattering with minimal regularity for semilinear
  wave equations.
\newblock {\em J. Funct. Anal.}, 130(2):357--426, 1995.

\bibitem{Pec84}
Hartmut Pecher.
\newblock Nonlinear small data scattering for the wave and {K}lein-{G}ordon
  equation.
\newblock {\em Math. Z.}, 185(2):261--270, 1984.

\bibitem{R:79}
Jeffrey Rauch.
\newblock I. {T}he {$u\sp{5}$} {K}lein-{G}ordon equation. {II}. {A}nomalous
  singularities for semilinear wave equations.
\newblock In {\em Nonlinear partial differential equations and their
  applications. {C}oll\`ege de {F}rance {S}eminar, {V}ol. {I} ({P}aris,
  1978/1979)}, volume~53 of {\em Res. Notes in Math.}, pages 335--364. Pitman,
  Boston, Mass.-London, 1981.

\bibitem{S:68}
Walter~A. Strauss.
\newblock Decay and asymptotics for {$ cmu=F(u)$}.
\newblock {\em J. Functional Analysis}, 2:409--457, 1968.

\end{thebibliography}

\end{document}